\useunder{\uline}{\ul}{}
\theoremstyle{plain}
\newtheorem{teo}{}[section]
\newtheorem{prop}[teo]{Proposition}
\newtheorem{lem}[teo]{Lemma}
\newtheorem{thm}[teo]{Theorem}
\newtheorem{df}[teo]{Definition}
\theoremstyle{definition}
\newtheorem{ex}[teo]{Example}
\newtheorem{rem}[teo]{Remark}
\newcommand\blfootnote[1]{%
  \begingroup
  \renewcommand\thefootnote{}\footnote{#1}%
  \addtocounter{footnote}{-1}%
  \endgroup
}
\title{A combinatorial description of shape theory}
\author{Pedro J. Chocano, Manuel A. Morón and Francisco R. Ruiz del Portal}
\date{}
\begin{document}
\maketitle

\begin{abstract}
We give a combinatorial description of shape theory using finite topological $T_0$-spaces (finite partially ordered sets). This description may lead to a sort of computational shape theory. Then we introduce the notion of core for inverse sequences of finite spaces and prove some properties.
\end{abstract}

\section{Introduction}\label{sec:introduccion}
\blfootnote{2020  Mathematics  Subject  Classification:   06A11,06A07, 54C56, 55P55.}
\blfootnote{Keywords: Posets, finite topological space, algebraic topology and shape theory.}
\blfootnote{This research is partially supported by Grant PGC2018-098321-B-100 from Ministerio de Ciencia, Innovación y Universidades (Spain).}

The theory of dynamical systems is a very active area of researching. This is due to its wide variety of applications to other areas of science such as physics, biology or engineering \cite{strogatz2015nonlinear,raduska2015applications,katok1995nonlinear}. Because of the need of finding computational methods to study dynamical systems, the theory of finite topological spaces \cite{may1966finite,barmak2011algebraic} has recently grown up in this direction. Classical topological methods in dynamical systems, the Conley index \cite{conley1978isolated,szymczak1995theconley,mrozek1990leray} or the Lefschetz fixed point theorem \cite{lefschetz1937on}, have been adapted to this framework \cite{lipinski2019conley,barmak2011lefschetz,chocano2020coincidence}. This combinatorial approach has lead to a sort of persistence algorithms that can be used to analyze data collected from dynamical systems \cite{mrozek2019persistent,mrozek2020persistence,mrozek2021persistenceConleyMorse}.  The classical Morse theory \cite{milnor1963Morse} has also been adapted for finite spaces in \cite{minian2012some,fernandez2020morse} and for simplicial complexes in \cite{forman1998combinatorial}. This reformulation of the Morse theory for simplicial complexes has led to interesting applications of computational aspects in a fruitful manner, see for example \cite{curry2015discrete} or \cite{harker2013discrete}. Therefore, it is natural to think that adapting classical notions we could have computational applications to the general study of dynamical systems, among others. 

Moreover, shape theory appears as a generalization of homotopy theory. It provides a weaker classification of compact metric spaces than the homotopy theory. Originally, it was developed to study global properties of compact metric spaces that do not necessarily have good local properties. Since some dynamical objects such as attractors do not behave well locally, shape theory has several applications there (for a recent account of this treatment we refer the reader to \cite{gabites2008dynamical} and its references). For example, the Conley index can also be defined using this theory \cite{robbin1988dynamical}. There are different approaches to shape theory: the original approach of K. Borsuk \cite{borsuk1971shape}, the categorical approach of S. Mardeši\'c and J. Segal \cite{mardevsic1982shape}, the categorical approach of J.M. Cordier and T. Porter \cite{cordier1989shape} or the intrinsic description using multivalued maps of J.M.R. Sanjurjo \cite{sanjurjo1992intrinsic}. In general, the main idea is to think about a topological space $X$ as an inverse system of ``easier'' spaces approximating $X$, where the morphisms are given in terms of these systems.   

Recently, methods to reconstruct compact metric spaces using finite topological spaces have been developed. To every compact metric space, it was associated in \cite{moron2008connectedness} an inverse sequence of finite spaces and continuous maps between them. Later in \cite{clader2009inverse}, E. Clader constructed for every simplicial complex $X$ an inverse sequence of finite spaces satisfying that its inverse limit contains a homeomorphic copy of $X$ which is a strong deformation retract. A generalization of this result to compact metric spaces was given in \cite{mondejar2015hyperspaces} and in \cite{chocano2021computational}, where computational aspects are studied and a method to reconstruct algebraic invariants is proposed. These methods are motivated somehow by the recent theory of topological data analysis \cite{edelsbrunner2008topologicalData,carlsson2009topology}, which has grown up very fast the last years (e.g. see \cite{otter2017roadmap} and the references given there). It is also worth noting that there is a more general method to reconstruct locally compact, paracompact and Hausdorff spaces using Alexandroff spaces (partially ordered sets) \cite{bilski2017inverse}, but it is not suitable from a computational viewpoint because it uses in general non-finite spaces.

Therefore, it is reasonable to ask whether it is possible to describe shape theory in terms of these inverse sequences. In that way, we may have a sort of computational or combinatorial shape theory. There are other previous results that point out some of the relations between finiteness and shape theory, see \cite{moron2001finite,sanjurjo1997density} or \cite{chocano2021shape}. In \cite{sanjurjo1997density} an intrinsic description of shape theory is given using sequences of continuous maps defined on open dense subsets of the compact metric spaces and having finite images. In \cite{moron2001finite} it is given an intrinsic representation of Čech homology of compacta in terms of inverse limits of discrete approximations. In \cite{chocano2021shape} it is constructed a category that classifies compacta by their shape and finite topological spaces by their weak homotopy type.

The organization of this paper is as follows. In Section \ref{sec:preliminares} we introduce the reformulation of the intrinsic description of shape theory \cite{sanjurjo1992intrinsic} that was given in \cite{moron2008homotopical}, we also recall the method to reconstruct algebraic invariants given in \cite{chocano2021computational} and basic results in the theory of finite spaces. In Section \ref{sec:descripcion} we describe our combinatorial approach of shape theory, define the notion of core for inverse sequences and get some shape invariants. In Section \ref{sec:resultadoPrincipal} we show the main result: the shape category of compact metric spaces is isomorphic to a category  whose morphisms are described in terms of finite topological spaces.  

\section{Preliminaries}\label{sec:preliminares}

Given a compact metric space $(X,d)$, where $d$ denotes the metric, we consider the so-called hyperspace of $X$, $2^X=\{C\subseteq X| C$ is non-empty and closed$\}$. Let $B(U)=\{C\in 2^X|C\subset U\}$ for every open set $U\subseteq X$. Then the family $B=\{B(U)| U\subseteq X$ is open$\}$ is a base for the upper semifinite topology on $2^X$. Moreover, $X$ can be embedded in $2^X$ because $X$ is a $T_2$-space, it suffices to consider $\varphi:X\rightarrow 2^X$ given by $\varphi(x)=\{ x\}$. For a complete exposition about this topic see \cite{nadler1978hyperspaces}. Write $U_\epsilon=\{C\in 2^X| \text{diam}(C)<\epsilon \}$, where $\text{diam}(C)$ denotes the diameter of $C$ and $\epsilon$ is a positive real value. We recall some properties of hyperspaces with the upper semifinite topology studied in \cite{moron2008homotopical} and \cite{mondejar2015hyperspaces}.

\begin{prop}\label{prop:baseEntornosAbiertos} Let $(X,d)$ be a compact metric space. Then the family $\mathcal{U}=\{U_\epsilon \}_{\epsilon>0}$ is a base of open neighborhoods of the embedding of $X$ in $2^X$.
\end{prop}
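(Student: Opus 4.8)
The plan is to verify the three defining properties of a base of open neighborhoods of the embedded copy $\varphi(X)$ inside $2^X$: that each $U_\epsilon$ is open in the upper semifinite topology, that each $U_\epsilon$ contains $\varphi(X)$, and that every open set of $2^X$ containing $\varphi(X)$ contains some $U_\epsilon$. The second property is immediate, since for every $x\in X$ the singleton $\{x\}$ has $\text{diam}(\{x\})=0<\epsilon$, so $\varphi(x)=\{x\}\in U_\epsilon$ for all $\epsilon>0$, whence $\varphi(X)\subseteq U_\epsilon$.

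For the openness of $U_\epsilon$, I would exhibit it as a union of basic open sets $B(U)$. Given $C\in U_\epsilon$ with $\text{diam}(C)=r<\epsilon$, consider the open neighborhood $N_\delta(C)=\{y\in X: d(y,C)<\delta\}$. A triangle-inequality estimate gives $\text{diam}(N_\delta(C))\leq r+2\delta$, so choosing $\delta<(\epsilon-r)/2$ forces $\text{diam}(N_\delta(C))<\epsilon$. Since diameter is monotone under inclusion, every closed subset of $N_\delta(C)$ again lies in $U_\epsilon$, that is, $C\in B(N_\delta(C))\subseteq U_\epsilon$. This realizes $U_\epsilon$ as a union of sets $B(N_\delta(C))$, hence open.

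The heart of the argument is the third property, where uniformity extracted from compactness is the decisive point. Let $V\subseteq 2^X$ be open with $\varphi(X)\subseteq V$. For each $x\in X$, since $\{x\}\in V$ and the sets $B(U)$ form a base, there is an open $U_x\subseteq X$ with $x\in U_x$ and $B(U_x)\subseteq V$. The family $\{U_x\}_{x\in X}$ is then an open cover of $X$, and because $X$ is a compact metric space I would invoke the Lebesgue number lemma (in its diameter form) to obtain $\epsilon>0$ such that every subset of $X$ of diameter smaller than $\epsilon$ is contained in some $U_x$. For this $\epsilon$, any $C\in U_\epsilon$ satisfies $\text{diam}(C)<\epsilon$, hence $C\subseteq U_x$ for some $x$, giving $C\in B(U_x)\subseteq V$; therefore $U_\epsilon\subseteq V$.

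The main obstacle is exactly producing a single $\epsilon$ that works simultaneously for all the singletons $\{x\}$: the pointwise choices $U_x$ are easy, but collapsing them into one uniform threshold is what requires compactness, and this is precisely what the Lebesgue number lemma supplies.
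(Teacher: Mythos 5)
Your proof is correct. Note that the paper does not actually prove this proposition: it is recalled without proof from \cite{moron2008homotopical} and \cite{mondejar2015hyperspaces}, so there is no in-paper argument to compare against. Your argument is the natural one and all three steps check out: the containment $\varphi(X)\subseteq U_\epsilon$ is immediate; the openness of $U_\epsilon$ follows correctly from the estimate $\mathrm{diam}(N_\delta(C))\leq \mathrm{diam}(C)+2\delta$ together with the monotonicity of diameter, which exhibits $U_\epsilon$ as a union of basic sets $B(N_\delta(C))$; and the cofinality of the $U_\epsilon$ among neighborhoods of $\varphi(X)$ is exactly where compactness enters, via the Lebesgue number lemma in its diameter form applied to the cover $\{U_x\}_{x\in X}$. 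You correctly identify that the only nontrivial point is converting the pointwise choices $B(U_x)\subseteq V$ into a single uniform threshold $\epsilon$.
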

\begin{lem}\label{lem:elevacionHiperespacio} Let $Z$ and $T$ be compact metric spaces and let $h:Z\rightarrow 2^T$ be a continuous map. Then $h^*:2^Z\rightarrow 2^T$ given by $h^*(C)=\bigcup_{c\in C} f(c)$ is well-defined and continuous.
\end{lem}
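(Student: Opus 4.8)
The plan is to keep in force throughout the \emph{upper semifinite} topology on the hyperspaces — the one whose base is $\{B(U) : U \subseteq T \text{ open}\}$ introduced above, and the analogous base on $2^Z$ — since this is the topology carried by $2^Z$ and $2^T$ in this paper. I would first record two elementary facts: any $C \in 2^Z$ is compact, being a closed subset of the compact space $Z$; and, unravelling the base, continuity of $h$ says precisely that $h^{-1}(B(U)) = \{z \in Z : h(z) \subseteq U\}$ is open in $Z$ for every open $U \subseteq T$. Since each $h(c)$ is nonempty, $h^*(C) = \bigcup_{c \in C} h(c)$ is nonempty, so well-definedness reduces to showing this union is \emph{closed} in $T$.

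The closedness is the step I expect to be the main obstacle, because a union of closed sets is closed only when the index set is controlled; here the compactness of $C$ supplies that control. I would fix $t \in T \setminus h^*(C)$ and upgrade the pointwise separation of $t$ from each $h(c)$ to a uniform one. For each $c \in C$, the set $h(c)$ is compact and omits $t$, so $\delta(c) := \mathrm{dist}(t, h(c)) > 0$ and $h(c) \subseteq U_c := \{s \in T : \mathrm{dist}(s,t) > \delta(c)/2\}$, an open set. The sets $V_c := \{z : h(z) \subseteq U_c\} = h^{-1}(B(U_c))$ are then open by continuity and cover $C$; extracting a finite subcover $V_{c_1}, \dots, V_{c_n}$ and putting $\delta := \tfrac12 \min_i \delta(c_i) > 0$, every $c \in C$ lies in some $V_{c_i}$, whence $h(c) \subseteq U_{c_i} \subseteq \{s : \mathrm{dist}(s,t) > \delta\}$. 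Thus the open ball $B(t,\delta)$ meets no $h(c)$, so it is disjoint from $h^*(C)$ and $t \notin \overline{h^*(C)}$. As $t$ was arbitrary, $h^*(C)$ is closed and $h^*(C) \in 2^T$.

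Granting well-definedness, continuity is the easy half, and I would prove it by checking that the preimage of each basic open set is open. For open $U \subseteq T$,
\[
(h^*)^{-1}(B(U)) = \{\, C \in 2^Z : {\textstyle\bigcup_{c \in C}} h(c) \subseteq U \,\} = \{\, C : h(c) \subseteq U \text{ for all } c \in C \,\} = \{\, C : C \subseteq W \,\} = B(W),
\]
where $W := h^{-1}(B(U))$ is open in $Z$ by continuity of $h$; hence $(h^*)^{-1}(B(U)) = B(W)$ is basic open in $2^Z$ and $h^*$ is continuous. The only link needing a word is the middle equality, which is immediate from the definition of $W$. Finally I would stress that the upper semifinite topology is genuinely the correct setting for the lemma as stated: for the Hausdorff-metric (Vietoris) topology the conclusion fails under the present hypothesis — already $h(z) = \{0\}$ for $z \neq \tfrac12$ and $h(\tfrac12) = T$ on $Z = T = [0,1]$ is upper semifinite continuous, yet $h^*$ is not Hausdorff continuous (take $C_n = \{\tfrac12 + \tfrac1n\} \to \{\tfrac12\}$) — so no stronger notion of continuity can be extracted from the assumptions given here.
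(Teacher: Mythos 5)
Your proof is correct: reading upper semifinite continuity of $h$ as openness of $h^{-1}(B(U))=\{z\in Z : h(z)\subseteq U\}$, using compactness of $C$ and a finite subcover to separate any $t\notin h^*(C)$ from the whole union (which gives closedness, the only nontrivial part of well-definedness), and then computing $(h^*)^{-1}(B(U))=B\bigl(h^{-1}(B(U))\bigr)$ for continuity is exactly the standard argument. The paper itself states this lemma without proof — it is recalled from the hyperspace references cited just before it, where essentially this same argument appears — so there is no divergence to report; your silent correction of the statement's typo ($f(c)$ should read $h(c)$) is right, and the closing remark contrasting with the Vietoris/Hausdorff-metric topology, while not needed for the lemma, is accurate.
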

%
The following results use the construction $U_\epsilon$ described above for two compact metric spaces $X$ and $Y$, so we will denote them by $U_\epsilon(X)$ and $U_\epsilon(Y)$ respectively.
\begin{thm}\label{thm:extensionHomotopiaHyperspaces} Let $X$ and $Y$ be compact metric spaces. If $H:X\times [0,1]\rightarrow 2^Y$ and $h:2^X\rightarrow 2^Y$ are continuous maps such that $H(x,0)=h_{|X}(\{ x\})$, then there exists a map $\overline{H}:2^X\times [0,1]\rightarrow 2^Y$ satisfying the following properties:
\begin{enumerate}
\item $\overline{H}(C,0)=h(C)$ for all $C\in 2^X$.
\item $\overline{H}_{|X\times [0,1]}=H$.
\item $\overline{H}$ is continuous.
\item If $H(x,t)\in U_\epsilon(Y)$ for all $(x,t)\in X\times [0,1]$, then there exists $\gamma>0$ such that $\overline{H}(U_{\gamma}(X)\times [0,1])\subset U_{\epsilon}(Y)$.

\end{enumerate}
\end{thm}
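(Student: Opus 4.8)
The plan is to write down an explicit piecewise formula for $\overline{H}$ and then verify the four properties, of which continuity at the slice $t=0$ is the only delicate point. I would set
\[
\overline{H}(C,t)=\begin{cases} h(C), & t=0,\\ \bigcup_{c\in C} H(c,t), & t>0.\end{cases}
\]
For $t>0$ the union is a well-defined element of $2^Y$ and depends continuously on $C$ (for fixed $t$) by Lemma \ref{lem:elevacionHiperespacio} applied to $H(\cdot,t)\colon X\to 2^Y$; joint continuity in $(C,t)$ on the open region $\{t>0\}$ follows from the same tube-lemma argument now run for $H$ as a map on $X\times[0,1]$. Properties (1) and (2) are then immediate: (1) holds by definition, and (2) holds because $\overline{H}(\{x\},0)=h(\{x\})=H(x,0)$ and $\overline{H}(\{x\},t)=H(x,t)$ for $t>0$, using the embedding $\varphi(x)=\{x\}$.

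Before the continuity argument I would record the order-theoretic fact that a continuous map into an upper semifinite hyperspace is monotone for inclusion: since $C'\subseteq C$ forces $C$ to lie in the closure of $\{C'\}$, continuity of $h$ gives $h(C')\subseteq h(C)$; in particular $\bigcup_{c\in C}H(c,0)=\bigcup_{c\in C}h(\{c\})\subseteq h(C)=\overline{H}(C,0)$. This inclusion is exactly why the two pieces glue continuously. In the upper semifinite topology a basic neighbourhood of a closed set $D$ is $B(V)=\{D'\in 2^Y: D'\subseteq V\}$, so continuity must only be checked in the ``$\subseteq V$'' direction, and as $t$ leaves $0$ the value is allowed to jump \emph{down} from $h(C)$ to the smaller union.

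For continuity at a point $(C,0)$ I would fix $B(V)$ with $h(C)\subseteq V$. Continuity of $h$ yields a basic neighbourhood $B(W)\ni C$ with $h(C')\subseteq V$ for every $C'\subseteq W$; then, since $H(x,0)=h(\{x\})\subseteq h(C)\subseteq V$ for $x\in C$, compactness of $C$ and continuity of $H$ produce (via a tube-lemma argument on the open set $H^{-1}(B(V))$) an open $W_2\supseteq C$ contained in $W$ and a $\delta>0$ with $H(x,t)\subseteq V$ for all $x\in W_2$ and $0\le t<\delta$. Then $B(W_2)\times[0,\delta)$ is a neighbourhood of $(C,0)$ whose image under $\overline{H}$ lies in $B(V)$: for $t=0$ by the choice of $W$, and for $t>0$ because every $c\in C'\subseteq W_2$ satisfies $H(c,t)\subseteq V$. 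This is the main obstacle, since here one must genuinely exploit the non-Hausdorff nature of the topology rather than try to match $h(C)$ with the pointwise union $\bigcup_{c}h(\{c\})$, which in general are distinct.

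Finally, property (4) follows formally once continuity is established. Since $U_\epsilon(Y)$ is open, $\mathcal{O}:=\overline{H}^{-1}(U_\epsilon(Y))$ is open in $2^X\times[0,1]$, and the hypothesis $H(x,t)\in U_\epsilon(Y)$ for all $(x,t)$—together with $\overline{H}(\{x\},0)=h(\{x\})=H(x,0)$—shows $\mathcal{O}\supseteq\varphi(X)\times[0,1]$, a compact set. Using that $\{U_\gamma(X)\}_{\gamma>0}$ is a neighbourhood base of $\varphi(X)$ (Proposition \ref{prop:baseEntornosAbiertos}), I would argue in two stages: for each fixed $t$, finitely many basic neighbourhoods cover $\varphi(X)\times\{t\}$ and yield some $U_{\gamma(t)}(X)\times J_t\subseteq\mathcal{O}$ with $J_t$ an open interval about $t$; compactness of $[0,1]$ then lets me take $\gamma=\min_j\gamma(t_j)>0$, giving $U_\gamma(X)\times[0,1]\subseteq\mathcal{O}$, which is precisely the desired inclusion $\overline{H}(U_\gamma(X)\times[0,1])\subset U_\epsilon(Y)$.
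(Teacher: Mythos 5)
The paper does not prove this theorem; it is recalled from the cited references on upper semifinite hyperspaces, so there is no in-paper argument to compare against. Your proof, however, is correct and self-contained. The explicit gluing
$\overline{H}(C,0)=h(C)$, $\overline{H}(C,t)=\bigcup_{c\in C}H(c,t)$ for $t>0$ satisfies (1) and (2) trivially, and the two genuinely delicate points are handled properly: first, the monotonicity of continuous maps into an upper semifinite hyperspace (from $C'\subseteq C$ one gets $C\in\overline{\{C'\}}$, hence $h(C')\subseteq h(C)$, so $H(x,0)=h(\{x\})\subseteq h(C)$ for $x\in C$), which is exactly what places $C\times\{0\}$ inside $H^{-1}(B(V))$ and lets the tube lemma produce the neighbourhood $B(W_2)\times[0,\delta)$ mapping into $B(V)$; second, the observation that upper semicontinuity only requires control in the ``$\subseteq V$'' direction, so the value may shrink as $t$ leaves $0$. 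Well-definedness and continuity on $\{t>0\}$ do follow from Lemma \ref{lem:elevacionHiperespacio} together with the tube-lemma argument you sketch, and your derivation of (4) from continuity of $\overline{H}$, openness of $U_\epsilon(Y)$, compactness of $\varphi(X)\times[0,1]$ and Proposition \ref{prop:baseEntornosAbiertos} is sound. The only places where you compress details are the joint continuity on $\{t>0\}$ and the finite-subcover bookkeeping in (4), but both fill in routinely.
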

Now, we recall the description of the shape theory that was given in \cite{moron2008homotopical} using the results obtained in \cite{sanjurjo1992intrinsic}.
\begin{df}\label{def:approximativeMap} Given two compact metric spaces $X$ and $Y$, a sequence of continuous functions $\overline{f}=\{f_k:X\rightarrow 2^Y \}_{k\in \mathbb{N}}$ is said to be an approximative map from $X$ to $Y$ if for every neighborhood $U$ of the canonical copy $Y$ in $2^Y$ there exists $k_0\in \mathbb{N}$ such that $f_k$ is homotopic to $f_{k+1}$ in $U$ for all $k\geq k_0$.
\end{df}
\begin{df}\label{def:approximativeMapHomotopia} Given two approximative maps $\overline{f}$ and $\overline{g}$ from $X$ to $Y$, $\overline{f}$ is homotopic to $\overline{g}$ if for each open neighborhood $U$  of the canonical copy $Y$ in $2^Y $ there exists $n_0$ such that $f_n$ is homotopic to $g_n$ in $U$ for every $n\geq n_0$.
\end{df}
\begin{thm}\label{thm:descripcionShapeMoronSanjurjo} The set of all homotopy classes of approximative maps from $X$ to $Y$ is in bijective correspondence with the set of shape morphisms from $X$ to $Y$.
\end{thm}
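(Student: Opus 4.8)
The plan is to reduce the statement to the intrinsic description of shape theory obtained by Sanjurjo in \cite{sanjurjo1992intrinsic}, using the hyperspace translation of \cite{moron2008homotopical}. The first step is to set up a dictionary between continuous maps $f\colon X\to 2^Y$ and upper semicontinuous multivalued maps $F\colon X\rightrightarrows Y$ with closed values, given by $F(x)=f(x)$ once $f(x)\in 2^Y$ is read as a closed subset of $Y$. Under the upper semifinite topology, continuity of $f$ is equivalent to upper semicontinuity of $F$, and the requirement $f(X)\subseteq U_\epsilon(Y)$ translates into the mesh condition $\text{diam}\,F(x)<\epsilon$ for all $x$. This is exactly the smallness control on which Sanjurjo's approximative maps are built.

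Next I would translate the two homotopy conditions. By Proposition \ref{prop:baseEntornosAbiertos}, the family $\{U_\epsilon(Y)\}_{\epsilon>0}$ is a base of open neighborhoods of the canonical copy of $Y$ in $2^Y$, so in Definition \ref{def:approximativeMap} it suffices to test the condition against the sets $U_\epsilon(Y)$. Thus an approximative map in the present sense corresponds, term by term, to a sequence of upper semicontinuous multivalued maps whose images become arbitrarily small and which are joined by homotopies of controlled mesh, that is, precisely a Sanjurjo approximative map. The same replacement applied to Definition \ref{def:approximativeMapHomotopia} shows that the hyperspace homotopy relation coincides with the homotopy relation between Sanjurjo's approximative maps.

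With the dictionary in place, I would define the assignment $\Phi$ sending a homotopy class of approximative maps in the hyperspace sense to the corresponding homotopy class of multivalued approximative maps, and then to the shape morphism it represents via Sanjurjo's correspondence. Well-definedness and injectivity of $\Phi$ follow from the equivalence of the two homotopy relations established above, while surjectivity follows because every shape morphism is realized by a Sanjurjo approximative map, which the dictionary converts back into a continuous sequence $\{f_k\colon X\to 2^Y\}_{k\in\mathbb{N}}$ satisfying Definition \ref{def:approximativeMap}.

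I expect the main obstacle to be the control of the mesh along homotopies when passing between the two formulations, and in particular when one needs to compose representatives or to lift homotopies defined on $X$ to the hyperspace level. This is exactly where Theorem \ref{thm:extensionHomotopiaHyperspaces} enters: its conclusion $(4)$, producing $\gamma>0$ with $\overline{H}(U_\gamma(X)\times[0,1])\subset U_\epsilon(Y)$, guarantees that extensions from $X$ to $2^X$ keep images small, so that the smallness conditions survive the translation. Lemma \ref{lem:elevacionHiperespacio} plays a supporting role by furnishing the well-defined continuous extensions $h^*\colon 2^X\to 2^Y$ needed to realize these lifts.
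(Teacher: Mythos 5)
The paper does not actually prove this statement: it is recalled without proof from \cite{moron2008homotopical}, which in turn rests on the intrinsic description of shape in \cite{sanjurjo1992intrinsic}. Your reconstruction --- the dictionary identifying continuous maps $X\to 2^Y$ (upper semifinite topology) with upper semicontinuous multivalued maps, the translation of the smallness and homotopy conditions via Proposition \ref{prop:baseEntornosAbiertos}, and the final appeal to Sanjurjo's correspondence with shape morphisms --- is precisely the argument of those references, so your approach is correct and coincides with the one the paper implicitly relies on.
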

We adapt the notion of composition of multivalued maps in \cite{sanjurjo1992intrinsic} to approximative maps.  Let $[f]:X\rightarrow Y$ and $[g]:Y\rightarrow Z$ be two classes of approximative maps, $\overline{f}=\{f_k:X\rightarrow 2^Y \}_{k\in \mathbb{N}}\in [f]$,  $\overline{g}=\{g_k:X\rightarrow 2^Y \}_{k\in \mathbb{N}}\in [g]$. Let $\{\epsilon_n\}_{n\in \mathbb{N}}$ be a strictly decreasing sequence of positive real values such that $g_{n}$ is homotopic to $g_{n_0}$ in $U_{\epsilon_{n_0}}(Z)$ for every $n\geq n_0$ and let $\{\nu_n\}_{n\in \mathbb{N}}$ be a strictly decreasing sequence of positive real values such that $\text{diam}(g_{n}(K))<\epsilon_n$ whenever $K$ is a set in $Y$ with $\text{diam}(K)<\nu_n$. Now consider a sequence of indices $k_1<k_2<...<k_n<...$ such that $f_k$ is homotopic to $f_{k_n}$ in $U_{\nu_n}(Y)$ for every $k\geq k_n$. We define $[g]\circ [f]=[\{g_n\circ f_{k_n}:X\rightarrow 2^Z \}_{n\in \mathbb{N}}]$. From \cite[Proof of Theorem 3]{sanjurjo1992intrinsic} and \cite[Proposition 18]{moron2008homotopical} it may be concluded that the composition of approximative maps is an approximative map and it does not depend on the representatives of the homotopy classes $[f]$ and $[g]$. 

We get a category whose objects are compact metric spaces and whose morphisms are the homotopy classes of approximative maps. The identity morphism is given by the homotopy class of $\{i_n:X\rightarrow 2^X\}_{n\in \mathbb{N}}$, where $i_n(x)=\{x\}$ for every $n\in \mathbb{N}$ and $x\in X$. Let $HN$ denote this category. 
\begin{thm}\label{thm:approximativeCategory} $HN$ is isomorphic to the shape category of compact metric spaces.
\end{thm}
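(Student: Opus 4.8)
The plan is to define a functor $F$ from $HN$ to the shape category $\mathrm{Sh}$ of compact metric spaces, show it is a genuine functor, and then observe that it is bijective on objects and on every Hom-set; a functor with these properties admits an inverse functor and is therefore an isomorphism of categories. On objects I set $F(X)=X$, which is a bijection since both categories have the compact metric spaces as objects. On morphisms I let $F$ send a homotopy class $[\overline{f}]$ of approximative maps to the shape morphism corresponding to it under Theorem \ref{thm:descripcionShapeMoronSanjurjo}; that theorem already guarantees $F$ is a bijection on each Hom-set. Everything therefore reduces to checking that this assignment respects identities and composition.

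For identities, the identity of $X$ in $HN$ is the class $[\{i_n\}]$ with $i_n(x)=\{x\}$. I would verify that the shape morphism assigned to $[\{i_n\}]$ is the shape identity of $X$, either directly or by noting that under the reformulation of \cite{moron2008homotopical} the approximative map $\{i_n\}$ is the hyperspace avatar of the constant multivalued sequence $\{\mathrm{id}_X\}$, which represents the shape identity in the intrinsic description of \cite{sanjurjo1992intrinsic}.

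The substantive step, and the main obstacle, is compatibility with composition: I must verify $F([g]\circ[f])=F([g])\circ F([f])$. Here $[g]\circ[f]$ is represented by $\{g_n\circ f_{k_n}\}$, where the composite is understood through the lift $g_n^{*}\colon 2^Y\to 2^Z$ of Lemma \ref{lem:elevacionHiperespacio}, and the interlacing indices $k_n$ together with the sequences $\{\epsilon_n\}$ and $\{\nu_n\}$ are chosen exactly so that the diameter-control conditions needed for the composition of approximative multivalued maps are met. The key point is that this hyperspace composition is the faithful translation, under the dictionary between closed-valued multivalued maps and maps into $2^{(-)}$, of Sanjurjo's composition of approximative multivalued maps. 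Since Sanjurjo's intrinsic shape category realizes the shape category with composition matching shape composition, as invoked from \cite[Proof of Theorem 3]{sanjurjo1992intrinsic} and \cite[Proposition 18]{moron2008homotopical}, the shape morphism determined by $\{g_n\circ f_{k_n}\}$ is the shape composite of the morphisms determined by $\overline{g}$ and $\overline{f}$. I expect the delicate part to be confirming that the particular interlacing used to define composition in $HN$ produces, in every neighborhood $U$ of the canonical copy of $Z$, the same homotopy class as the composite built in Sanjurjo's framework; this is a cofinality-type argument showing that different admissible choices of $\{\epsilon_n\}$, $\{\nu_n\}$ and $\{k_n\}$ yield homotopic interlaced sequences, so that no ambiguity is introduced.

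Having verified functoriality, I would conclude as follows: $F$ is a functor that is the identity, hence a bijection, on objects and a bijection on each Hom-set by Theorem \ref{thm:descripcionShapeMoronSanjurjo}. Consequently the inverse assignment on morphisms is automatically functorial, providing a two-sided inverse functor, and $F$ is an isomorphism of categories between $HN$ and the shape category of compact metric spaces.
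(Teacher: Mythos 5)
Your proposal is correct and follows essentially the same route the paper takes: the paper states this theorem without proof, relying on the Hom-set bijection of Theorem \ref{thm:descripcionShapeMoronSanjurjo} together with the compatibility of the interlaced composition with Sanjurjo's composition, as established in \cite[Proof of Theorem 3]{sanjurjo1992intrinsic} and \cite[Proposition 18]{moron2008homotopical}. You correctly identify compatibility with composition (and its independence of the choices of $\{\epsilon_n\}$, $\{\nu_n\}$, $\{k_n\}$) as the only substantive point, and the concluding observation that a functor bijective on objects and on every Hom-set is an isomorphism of categories is sound.
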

\begin{rem} It can be said that this approach substitutes the Hilbert cube by hyperspaces in the original approach given by K. Borsuk \cite{borsuk1971shape}.
\end{rem}

We recall briefly the method obtained in \cite{chocano2021computational} to reconstruct algebraic invariants of $X$. We say that a finite set $A\subset X$ is an $\epsilon$-approximation of $X$ if for every $x\in X$ there exists $a\in A$ such that $d(x,a)<\epsilon$, where $\epsilon$ is a positive real value. Given an $\epsilon$-approximation $A$ of $X$, we define $\mathcal{U}_{4\epsilon}(A)=\{C\subset A|\text{diam}(C)<4\epsilon \}$. Since $A$ is a finite set, we get that $\mathcal{U}_{4\epsilon}(A)=\{C\subset A|\text{diam}(C)<4\epsilon \}$ is a finite partially ordered set with the following relation: $C\leq D$ if and only if $C\subseteq D$. Thus, $\mathcal{U}_{4\epsilon}(A)=\{C\subset A|\text{diam}(C)<4\epsilon \}$ is a finite topological $T_0$-space because the category of finite topological $T_0$-spaces and finite partially ordered sets (posets) are isomorphic \cite{alexandroff1937diskrete}. From now on, we denote the category of finite posets by $Poset$ and we assume that every finite spaces is $T_0$.

Let $T$ be a finite space and $x\in T$. We denote by $U_x$ the intersection of every open set containing $x$, which is again open. We say that $x\leq y$ if and only if $U_x\subseteq U_y$. With this relation we have that $X$ is a finite poset. Suppose $(X,\leq)$ is a finite poset. Then the family of lower sets of $\leq$ is a $T_0$ topology on $X$. A lower set $S$ of $X$ is a set satisfying that if $x\leq y\in S$, then $x\in S$. See \cite{may1966finite} or \cite{barmak2011algebraic} for a complete introduction to the theory of finite topological spaces. In what follows, we treat finite posets and finite topological $T_0$-spaces as the same object. Note that a map $f:X\rightarrow Y$ between finite topological spaces is continuous if and only if it is order-preserving. 

The Hasse diagram of a finite poset $X$ is a directed graph: the vertices are the points of $X$ and there is an edge $(x,y)$ if and only if $x<y$ and there is no $z\in X$ satisfying $x<z<y$. We assume an upward orientation in subsequent graphs.

An important concept in the theory of finite spaces is the notion of weak homotopy equivalence, we recall it. A map $f:X\rightarrow Y$ is a weak homotopy equivalence if the induced maps $f_*:\pi_i(X,x)\rightarrow \pi_i(Y,f(x))$ are isomorphisms for all $x\in X$ and all positive integer number $i$, where the map $f_*$ induces a bijection in dimension $0$. Notice that every weak homotopy equivalence induces isomorphisms on singular homology groups by a well-known theorem of J.H.C. Whitehead.
 
There is a functor $\mathcal{K}:Poset\rightarrow SimpComplex$, where $SimpComplex$ denotes the category of simplicial complexes whose morphisms are simplicial maps. As usual, $|K|$ stands for the geometric realization of a simplicial complex $K$. For a finite poset $X$, $\mathcal{K}(X)$ denotes the order complex of $X$. There is also a functor $\mathcal{X}:SimpComplex \rightarrow Poset$. For a simplicial complex $L$, $\mathcal{X}(L)$ denotes the face poset of $L$. The finite barycentric subdivision of a finite poset $X$ is defined by $\mathcal{X}(\mathcal{K}(X))$. 
\begin{thm}\label{thm:McCord} For each finite topological space $X$ there exists a weak homotopy equivalence $f:|\mathcal{K}(X)|\rightarrow X$. For each finite simplicial complex $K$ there exists a weak homotopy equivalence $f:|K|\rightarrow \mathcal{X}(K)$.
\end{thm}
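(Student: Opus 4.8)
The plan is to construct the classical McCord map explicitly and then deduce that it is a weak homotopy equivalence from its local behaviour over the minimal basis of the finite space. For the first assertion I would define $\mu\colon |\mathcal{K}(X)|\to X$ as follows: every point $\alpha\in|\mathcal{K}(X)|$ lies in the interior of a unique simplex, which corresponds to a chain $x_0<x_1<\dots<x_n$ in $X$; set $\mu(\alpha)=\min\mathrm{supp}(\alpha)=x_0$. First I would verify continuity. Since the minimal open sets $U_x=\{y\in X: y\le x\}$ form a basis of the topology of $X$, it suffices to check that each $\mu^{-1}(U_x)$ is open; a direct computation gives $\mu^{-1}(U_x)=\{\alpha:\min\mathrm{supp}(\alpha)\le x\}=\bigcup_{y\le x}\mathrm{st}(y)$, where $\mathrm{st}(y)$ denotes the open star of the vertex $y$, so this set is a union of open stars and hence open.

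Next I would reduce the problem to a local statement. The key tool is McCord's criterion: if $\mathcal{U}$ is a basis-like open cover of the codomain such that the restriction $\mu\colon \mu^{-1}(U)\to U$ is a weak homotopy equivalence for every $U\in\mathcal{U}$, then $\mu$ itself is a weak homotopy equivalence. I would apply this with $\mathcal{U}=\{U_x\}_{x\in X}$. For each $x$ the finite space $U_x$ has $x$ as a maximum, so it is contractible, and it remains to see that $\mu^{-1}(U_x)$ is weakly contractible. I would argue that $\mu^{-1}(U_x)$ deformation retracts onto the closed subcomplex $|\mathcal{K}(U_x)|$ (pushing each half-open cell toward the faces lying over $U_x$), and that $\mathcal{K}(U_x)$ is a simplicial cone with apex $x$, because every chain contained in $U_x$ extends by adjoining its maximum $x$; hence $|\mathcal{K}(U_x)|$ is contractible. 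Since both $U_x$ and $\mu^{-1}(U_x)$ are then weakly contractible, the restriction is automatically a weak homotopy equivalence, and the criterion yields that $\mu$ is one.

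I expect the main obstacle to be establishing McCord's criterion itself, that is, showing that weak homotopy equivalences glue over a basis-like cover. This is the genuinely non-formal ingredient; the honest route is through the theory of quasifibrations of Dold and Thom or, equivalently, through a mapping-cylinder argument together with the fact that $|\mathcal{K}(X)|$ and $X$ arise as compatible colimits over the poset of basis elements, combined with a compactness argument that reduces each homotopy class on the source to finitely many basis members. Once this machinery is in place, verifying the hypotheses (the contractibility computations above) is routine.

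Finally, the second assertion follows from the first by barycentric subdivision, so no separate gluing argument is needed. For a finite simplicial complex $K$ the order complex $\mathcal{K}(\mathcal{X}(K))$ of its face poset is exactly the barycentric subdivision of $K$, and there is a canonical homeomorphism $|K|\cong|\mathcal{K}(\mathcal{X}(K))|$. Applying the first part to the finite space $\mathcal{X}(K)$ gives a weak homotopy equivalence $|\mathcal{K}(\mathcal{X}(K))|\to\mathcal{X}(K)$; precomposing with this homeomorphism produces the desired weak homotopy equivalence $|K|\to\mathcal{X}(K)$.
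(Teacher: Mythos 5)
Your proposal is correct. The paper does not prove this statement at all --- it cites it directly to McCord's 1966 paper --- and what you have written is precisely McCord's original argument: the map $\mu(\alpha)=\min\operatorname{supp}(\alpha)$, continuity via $\mu^{-1}(U_x)=\bigcup_{y\le x}\operatorname{st}(y)$, the reduction to the basis-like cover $\{U_x\}$ by McCord's gluing theorem for weak homotopy equivalences (whose proof indeed rests on the Dold--Thom quasifibration machinery, as you note), the two local contractibility computations, and the deduction of the simplicial-complex case from the poset case via the identification of $\mathcal{K}(\mathcal{X}(K))$ with the barycentric subdivision of $K$. You also correctly isolate the gluing theorem as the only non-formal ingredient rather than glossing over it, so there is nothing to object to.
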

\begin{thm}\label{thm:comutatividadDiagramaMcCord} Let $g:X\rightarrow Y$ be a continuous function between finite spaces and let $f_X:|\mathcal{K}(X)|\rightarrow X$ and $f_Y:|\mathcal{K}(Y)|\rightarrow Y$ denote the weak homotopy equivalences of Theorem \ref{thm:McCord}. Then $g\circ f_X=f_Y \circ |\mathcal{K}(f)|$.
\end{thm}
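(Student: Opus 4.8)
The plan is to prove the identity pointwise, by writing down explicit formulas for the three maps involved and comparing the images of an arbitrary point of $|\mathcal{K}(X)|$. First I would record the explicit form of the McCord equivalence under the convention fixed above, in which the open sets of a finite poset are its lower sets and $U_x=\{z:z\leq x\}$. Every point $\alpha\in|\mathcal{K}(X)|$ lies in the interior of a unique simplex of $\mathcal{K}(X)$, whose vertices form a chain $x_0<x_1<\cdots<x_n$ in $X$; I call this chain the support $\mathrm{supp}(\alpha)$, that is, the set of vertices on which the barycentric coordinates of $\alpha$ are strictly positive. For the lower-set topology the weak homotopy equivalence of Theorem \ref{thm:McCord} is given by $f_X(\alpha)=\min\mathrm{supp}(\alpha)=x_0$. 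Its continuity is the key sanity check: in a neighborhood of $\alpha$ the support can only grow (nearby points lie in simplices having the carrier of $\alpha$ as a face), so the minimum can only decrease, and hence $f_X^{-1}(U_x)=\{\alpha:\min\mathrm{supp}(\alpha)\leq x\}$ is open.

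Second, I would make the map $|\mathcal{K}(g)|$ explicit. Since $g$ is continuous it is order-preserving, so it carries each chain $x_0<\cdots<x_n$ of $X$ to a totally ordered family $g(x_0)\leq\cdots\leq g(x_n)$ in $Y$; thus $\mathcal{K}(g)$ is a well-defined simplicial map sending the vertex $x$ to $g(x)$, and its geometric realization acts on barycentric coordinates by $|\mathcal{K}(g)|\bigl(\sum_i t_i x_i\bigr)=\sum_i t_i\, g(x_i)$, with coefficients attached to coinciding images added together.

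Third, the verification itself is a direct computation on an arbitrary $\alpha$ with $\mathrm{supp}(\alpha)=\{x_0<\cdots<x_n\}$. On one side, $g(f_X(\alpha))=g(x_0)$. On the other side, the point $|\mathcal{K}(g)|(\alpha)=\sum_i t_i\, g(x_i)$ has support exactly the set of distinct values among $g(x_0),\dots,g(x_n)$, each carrying a strictly positive coefficient because every $t_i>0$; and since $g$ is order-preserving this set has minimum $g(x_0)$. Therefore $f_Y(|\mathcal{K}(g)|(\alpha))=g(x_0)$ as well, so the two composites agree and the square commutes.

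There is no serious obstacle here; the only delicate points are bookkeeping. One must confirm that $f_X(\alpha)=\min\mathrm{supp}(\alpha)$, rather than the maximum, is the description forced by the lower-set convention adopted in this paper, since this is precisely what makes the minima correspond under the order-preserving map $g$. One must also handle the case in which $g$ identifies several vertices of the carrier of $\alpha$: there the barycentric coordinates collapse and add, but they stay strictly positive, so $\mathrm{supp}(|\mathcal{K}(g)|(\alpha))=g(\mathrm{supp}(\alpha))$ and its minimum remains $g(x_0)$, which is what closes the argument.
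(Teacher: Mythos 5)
Your proposal is correct: the paper offers no proof of this statement, citing it directly to McCord's paper, and your pointwise verification --- describing the McCord map as $f_X(\alpha)=\min\mathrm{supp}(\alpha)$ under the lower-set convention, checking its continuity via the open star, and using that an order-preserving $g$ carries the minimum of a chain to the minimum of its image chain --- is exactly the standard argument found there. The only other remark is that the $|\mathcal{K}(f)|$ in the statement is a typo for $|\mathcal{K}(g)|$, which you have correctly read through.
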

Theorems \ref{thm:McCord} and \ref{thm:comutatividadDiagramaMcCord} can be found in \cite{mccord1966singular}. For a fuller treatment about finite spaces see \cite{may1966finite} or \cite{barmak2011algebraic}.

Let $(X,d)$ be a compact metric space and let $\{\epsilon_n\}_{n\in \mathbb{N}}$ be a sequence of positive real values satisfying that $\epsilon_{n+1}<\frac{\epsilon_n}{2}$. For every $n\in \mathbb{N}$ we consider an $\epsilon_n$-approximation $A_n$. The map $q_{n,n+1}:\mathcal{U}_{4\epsilon_{n+1}}(A_{n+1})\rightarrow \mathcal{U}_{4\epsilon_n}(A_n)$ given by $q_{n,n+1}(C)=\bigcup_{c\in C}\mathcal{B}(c,\epsilon_n)\cap A_n$ is continuous (see \cite{chocano2021computational}), where $\mathcal{B}(x,\epsilon)$ denotes the open ball of radius $\epsilon$ and center $x$. We say that the inverse sequence $(\mathcal{U}_{4\epsilon_n}(A_n),q_{n,n+1})$ is a \textbf{finite approximation} of $X$.

Let $H_*$ denote the homological functor, where we consider the singular homology.

\begin{prop}[\cite{chocano2021computational}]\label{prop:reconstruccionHomologia} Given a compact metric space $(X,d)$ and a finite approximation $(\mathcal{U}_{4\epsilon_n}(A_n),q_{n,n+1})$ of it. The inverse limit of $(H_l((\mathcal{U}_{4\epsilon_n}(A_n)),H_l(q_{n,n+1}))$ is isomorphic to the $l$-dimensional Čech homology group of $X$.
\end{prop}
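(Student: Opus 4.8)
The plan is to pass, through the McCord correspondence, from the inverse system of finite spaces to an inverse system of finite simplicial complexes, to recognize the latter as a Vietoris–Rips system associated to nets of mesh tending to zero, and then to invoke the classical identification of Vietoris homology with Čech homology on compacta.

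First I would observe that each finite space $\mathcal{U}_{4\epsilon_n}(A_n)$ is exactly the face poset of a finite simplicial complex. Let $K_n$ be the complex with vertex set $A_n$ whose simplices are the nonempty $C\subseteq A_n$ with $\text{diam}(C)<4\epsilon_n$; then $\mathcal{U}_{4\epsilon_n}(A_n)=\mathcal{X}(K_n)$ as posets. By the second assertion of Theorem \ref{thm:McCord} there is a weak homotopy equivalence $|K_n|\to\mathcal{X}(K_n)=\mathcal{U}_{4\epsilon_n}(A_n)$, whence $H_l(\mathcal{U}_{4\epsilon_n}(A_n))\cong H_l(|K_n|)$ for every $l$ (using that $|\mathcal{K}(\mathcal{X}(K_n))|$ is the barycentric subdivision of $K_n$, so the two realizations have the same homology). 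Applying Theorem \ref{thm:comutatividadDiagramaMcCord} to $q_{n,n+1}$ makes these isomorphisms natural: the bonding map $H_l(q_{n,n+1})$ is carried to the map induced by $|\mathcal{K}(q_{n,n+1})|$. Consequently $\varprojlim\bigl(H_l(\mathcal{U}_{4\epsilon_n}(A_n)),H_l(q_{n,n+1})\bigr)\cong\varprojlim\bigl(H_l(|K_n|),H_l(|\mathcal{K}(q_{n,n+1})|)\bigr)$.

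Next I would identify the system $(|K_n|)$ with a Vietoris system of $X$. Since $\epsilon_{n+1}<\epsilon_n/2$ forces $\epsilon_n\to 0$, the $\epsilon_n$-nets $A_n$ become arbitrarily dense, and the diameter condition makes each $K_n$ a Vietoris–Rips complex at scale $4\epsilon_n$. The map $q_{n,n+1}$ sends a simplex $C$ of $K_{n+1}$ to $\bigcup_{c\in C}(\mathcal{B}(c,\epsilon_n)\cap A_n)$, a simplex of $K_n$; this is precisely the refinement-type projection that underlies the Vietoris inverse system. I would then invoke the classical theorem that, for a compact metric space, the inverse limit of the homology of such Vietoris complexes over nets of mesh tending to $0$ is isomorphic to the Čech homology $\check{H}_l(X)$. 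Because $X$ is compact, the associated covers $\{\mathcal{B}(a,2\epsilon_n)\}_{a\in A_n}$ have positive Lebesgue numbers and mesh tending to $0$, so they form a cofinal sequence in the directed set of all finite open covers; hence the sequential inverse limit over $n$ already computes the full Čech inverse limit, and no uncountable system is needed.

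The remaining point requiring care is the well-definedness of the comparison. The Čech bonding maps are defined only up to homotopy through refinement projections, whereas $q_{n,n+1}$ is an honest map. Here I would use that any two simplicial projections subordinate to the same refinement of covers are contiguous, hence induce the same homomorphism on homology; this pins down $H_l(|\mathcal{K}(q_{n,n+1})|)$ as the canonical Čech projection and makes the two inverse systems isomorphic term by term and compatibly with bonding maps. Passing to $\varprojlim$, which is functorial, then yields the asserted isomorphism with $\check{H}_l(X)$.

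I expect the principal obstacle to be exactly this compatibility of the bonding maps, compounded by the Rips–Čech discrepancy: the complexes $K_n$ encode a pairwise (Rips) condition, while the nerves of the ball covers encode an intersection (Čech) condition. Reconciling them requires the standard interleaving of a nerve between two Rips complexes at comparable scales, together with a verification that the interleaving maps commute, up to contiguity, both with $q_{n,n+1}$ and with the refinement projections. Once the homology systems are shown to be interleaved by maps agreeing up to these homotopies, the inverse limit preserves the interleaving and delivers the isomorphism.
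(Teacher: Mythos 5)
This proposition is imported from \cite{chocano2021computational} and the present paper offers no proof of it, so the comparison is with the argument in that reference, which is the one reflected in Proposition \ref{prop:Hpolexpansion}: one first shows (via \cite[Theorem 12]{mondevsic1982shape}-style expansion arguments, concretely \cite[Theorem 12]{mondejar2015hyperspaces} and Section 5 of \cite{chocano2021computational}) that $(\mathcal{K}(\mathcal{U}_{4\epsilon_n}(A_n)),\mathcal{K}(q_{n,n+1}))$ is an $HPol$-expansion of $X$, whence the inverse limit of the homology of its terms \emph{is} the \v{C}ech homology of $X$ by definition, and McCord's Theorems \ref{thm:McCord} and \ref{thm:comutatividadDiagramaMcCord} transfer the homology of the polyhedra to that of the finite spaces. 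Your route agrees with this in its first step (the McCord transfer, including the correct observation that $\mathcal{U}_{4\epsilon_n}(A_n)$ is the face poset of the Rips complex $K_n$ of $A_n$ at scale $4\epsilon_n$ and that one must pass through the barycentric subdivision to apply Theorem \ref{thm:comutatividadDiagramaMcCord}), but then diverges: instead of invoking an expansion, you identify $(|K_n|)$ with a Vietoris system over nets of vanishing mesh and appeal to the Dowker-type identification of Vietoris and \v{C}ech homology on compacta, using the Lebesgue number lemma for cofinality of the ball covers. This is a legitimate and more classical route, and it has the virtue of being self-contained modulo Dowker's theorem, whereas the paper's route hides the analytic content inside the expansion theorem of \cite{mondejar2015hyperspaces}; the price is that you must carry out the Rips--\v{C}ech interleaving $\check{C}(A_n,2\epsilon_n)\subseteq K_n\subseteq\check{C}(A_n,4\epsilon_n)$ and check that all zig-zag maps commute up to contiguity with $q_{n,n+1}$ and with the refinement projections. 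You correctly flag this as the technical heart but leave it as ``standard''; it does go through with the stated constants (for a simplex $C$ of $K_{n+1}$ one has $\mathrm{diam}(q_{n,n+1}(C)\cup\pi(C))<\epsilon_n+4\epsilon_{n+1}+\epsilon_n<4\epsilon_n$ for any nearest-point projection $\pi$, so the two maps are contiguous), but a complete proof would have to write out these estimates rather than gesture at them. With that step executed, your argument is correct.
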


Note that if $X$ is a CW-complex, then the singular homology groups of $X$ coincide with the Čech homology groups of $X$. As for prerequisites, the reader is expected to be familiar with the notion of pro-category. Concretely, we will use the categories pro-$HTop$ and pro-$Top$, where $Top$ is the topological category and $HPol$ is the homotopy category of topological spaces. For more details about this topic, inverse systems and inverse sequences we refer the reader to \cite{mardevsic1982shape}.

\begin{rem}\label{rem:mainConstruction} There is also a similar construction that uses other bonding maps. Let $(X,d)$ be a compact metric space and let $\{\delta_n\}_{n\in \mathbb{N}}$ be a sequence of positive real values satisfying that $\epsilon_{n+1}<\frac{\epsilon_n}{2}$. For every $n\in \mathbb{N}$ we consider an $\delta_n$-approximation $A_n$. We define $p_{n,n+1}:\mathcal{U}_{2\delta_{n+1}}(A_{n+1})\rightarrow \mathcal{U}_{2\delta_n}(A_n)$ by $p_{n,n+1}(C)=\bigcup_{x\in C}\{ a\in A_n|d(a,x)=d(A,x)\}$. We get that $\mathcal{U}_{2\delta_n}(A_n)$ is a finite poset with the subset relation and $(\mathcal{U}_{2\delta_n}(A_n),p_{n,n+1})$ is an inverse sequence. This inverse sequence is similar to the Main Construction introduced in \cite{moron2008connectedness} and is isomorphic to every finite approximation $(\mathcal{U}_{4\epsilon_n}(A_n),q_{n,n+1})$ of $X$ in pro-$HTop$ (see\cite{chocano2021computational}). 
\end{rem}

\section{Combinatorial description of shape theory and shape invariants}\label{sec:descripcion}
The idea of the combinatorial description of the shape theory is to use finite approximations of compact metric spaces to define morphisms between them. Given a compact metric space $(X,d)$, we construct from finite samples of $X$ an inverse sequence of finite topological spaces, i.e., the finite approximation of $X$ introduced in Section \ref{sec:preliminares}. This construction is not unique since it depends on the finite samples of $X$ and the values of $\epsilon_n$. However, in \cite{chocano2021computational} it is shown that given two finite approximations of $X$, they are isomorphic in pro-$HTop$. Hence, we can say that a finite approximation $(\mathcal{U}_{4\epsilon_n}(A_n),q_{n,n+1})$ of $X$ is unique in a suitable category and we denote it by $T(X)$.

We define the category $\mathbb{E}$ as follows. The objects of $\mathbb{E}$ are compact metric spaces. Given two compact metric spaces $X$ and $Y$, $\mathbb{E}(X,Y)=\{(f_n,f):T(X)\rightarrow T(Y)|$ $(f_n,f)$ is a morphism in pro-$HTop$ where $T(X)$ is a finite approximation of $X$ and $T(Y)$ is a finite approximation of $Y\}$. It is easy to check that $\mathbb{E}$ is a category. In Figure \ref{fig:descripcionShape} we have depicted a schematic representation. We now state the main result and develop some notions and shape invariants.

\begin{figure}[h]
\centering
\includegraphics[scale=0.9]{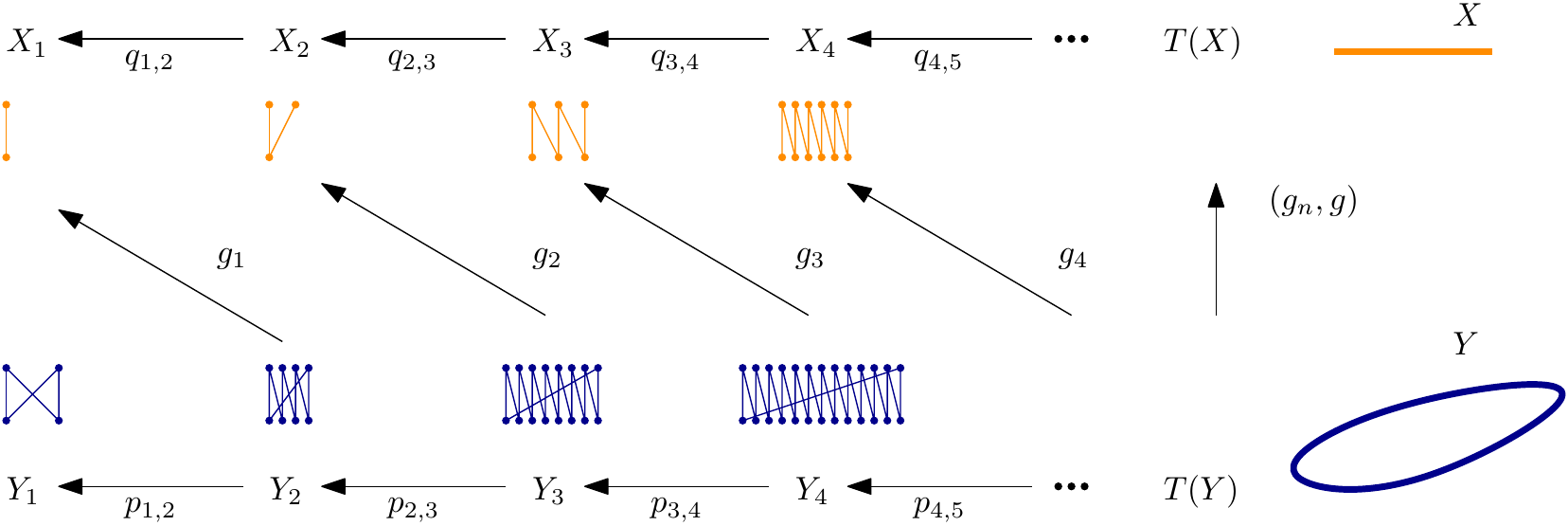}
\caption{Schematic description of the combinatorial approach of shape theory.}\label{fig:descripcionShape}
\end{figure}

\begin{thm}\label{thm:shapeTheorem} The category $\mathbb{E}$ is isomorphic to the shape category of compact metric spaces.
\end{thm}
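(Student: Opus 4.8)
The plan is to construct explicit functors $\Phi: HN \to \mathbb{E}$ and $\Psi: \mathbb{E} \to HN$ and prove they are mutually inverse, after which the result follows by Theorem \ref{thm:approximativeCategory}, which already identifies $HN$ with the shape category. Both categories share the same objects (compact metric spaces), so the content is entirely on the level of morphisms, and the functors will act as the identity on objects. The bridge between the two descriptions is the finite approximation $T(X)$: an approximative map $\overline{f} = \{f_k : X \to 2^Y\}$ encodes, for each neighborhood $U_\epsilon(Y)$, a stable homotopy class of maps into that neighborhood, while a pro-$HTop$ morphism $(f_n, f) : T(X) \to T(Y)$ encodes a compatible system of homotopy classes between the finite pieces $\mathcal{U}_{4\epsilon_n}(A_n)$. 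The key observation is that, by Proposition \ref{prop:baseEntornosAbiertos}, the neighborhoods $U_\epsilon(Y)$ are cofinal among all neighborhoods of $Y$ in $2^Y$, and the finite spaces $\mathcal{U}_{4\epsilon_n}(A_n)$ sit inside these neighborhoods and approximate them up to weak homotopy equivalence.

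The first concrete step is to define $\Phi$ on morphisms. Given an approximative map $\overline{f}$, I would use the defining stability property to extract, for each $n$, a well-defined homotopy class of a map from a finite approximation space of $X$ at a sufficiently fine level $A_{k_n}$ into the finite approximation space of $Y$ at level $A_n$, choosing indices so that the diameters control the images (exactly the bookkeeping already used in the excerpt to define composition of approximative maps). The compatibility of these classes with the bonding maps $q_{n,n+1}$ must be checked, yielding a genuine pro-$HTop$ morphism $(f_n,f)$. The second step is to define $\Psi$: given $(f_n, f)$, I would realize each finite map geometrically via the McCord weak equivalences $|\mathcal{K}(\cdot)|$ of Theorem \ref{thm:McCord}, compose with the embedding of these realizations into the hyperspace neighborhoods, and use Theorem \ref{thm:extensionHomotopiaHyperspaces} to extend these maps and produce an honest sequence $\{f_k : X \to 2^Y\}$; the extension property (item 4, controlling images inside $U_\epsilon(Y)$) is precisely what guarantees the resulting sequence is approximative.

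The most delicate parts are functoriality and well-definedness, and I expect functoriality with respect to composition to be the main obstacle. Composition in $HN$ is defined only up to a careful reindexing of the sequences (the $\epsilon_n, \nu_n, k_n$ choices in the excerpt), whereas composition in pro-$HTop$ is defined through the standard cofinal reindexing of index maps; showing that $\Phi$ intertwines these two notions of composition requires matching the two reindexing schemes and invoking that the answer is independent of all choices up to homotopy, respectively up to the pro-category equivalence relation. I would handle this by proving that both $\Phi$ and $\Psi$ are well-defined on homotopy/equivalence classes first, using Theorem \ref{thm:comutatividadDiagramaMcCord} to ensure the McCord realizations are natural in the finite maps, and then checking $\Psi \circ \Phi = \mathrm{id}_{HN}$ and $\Phi \circ \Psi = \mathrm{id}_{\mathbb{E}}$ directly on representatives. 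For the latter two identities, the essential input is that a map into $2^Y$ landing in $U_\epsilon(Y)$ is, up to homotopy in that neighborhood, recovered from its restriction to a fine finite approximation of $Y$, which is exactly the reconstruction content behind Proposition \ref{prop:reconstruccionHomologia} and the cofinality of the $U_\epsilon(Y)$. Once these two composites are identified with identities on the nose (or up to the categorical equivalence relations), the isomorphism $\mathbb{E} \cong \mathrm{Sh}$ follows by composing with the isomorphism of Theorem \ref{thm:approximativeCategory}.
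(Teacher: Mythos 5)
Your overall architecture is the same as the paper's: two mutually inverse assignments between homotopy classes of approximative maps and pro-$HTop$ morphisms $T(X)\rightarrow T(Y)$, combined with Theorem \ref{thm:approximativeCategory}. Your $\Phi$ is essentially the paper's construction: pick levels $s(n)$ where the sequence has stabilized in $U_{2\epsilon_n}(Y)$, use Theorem \ref{thm:extensionHomotopiaHyperspaces} to get a $\gamma_n$ so that $f^*_{s(n)}$ maps $U_{\gamma_n}(X)$ into $U_{2\epsilon_n}(Y)$, and then retract $U_{2\epsilon_n}(Y)$ onto $\mathcal{U}_{4\epsilon_n}(B_n)$ by a nearest-point map; the compatibility checks are carried out with the ``union of maps'' trick. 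So that half is fine in outline.

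The genuine gap is in your $\Psi$. To turn a pro-$HTop$ morphism $(f_n,f)$ into a sequence of maps $X\rightarrow 2^Y$ you must first produce a continuous map from $X$ \emph{into} each finite space $\mathcal{U}_{4\delta_{f(k)}}(A_{f(k)})$; this is the essential mechanism, supplied in the paper by the nearest-point map $p(x)=\{a\in A \mid d(x,a)=d(x,A)\}$ (Lemma \ref{lem:definiciondep:x}), after which one simply sets $F_k=f_k\circ p_{f(k)}$, using that $\mathcal{U}_{4\epsilon_k}(B_k)$ is literally a subspace of $2^Y$ contained in $U_{4\epsilon_k}(Y)$. Your proposed detour through McCord realizations does not supply this: the weak homotopy equivalences of Theorem \ref{thm:McCord} go from $|\mathcal{K}(Z)|$ to the finite space $Z$ and cannot be inverted in $HTop$, there is no ``embedding of these realizations into the hyperspace neighborhoods'' (the finite spaces embed in $2^Y$, their geometric realizations do not), and no map from $X$ into $|\mathcal{K}(\mathcal{U}_{4\delta_{f(k)}}(A_{f(k)}))|$ is ever constructed. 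Moreover, Theorem \ref{thm:extensionHomotopiaHyperspaces} cannot ``produce'' the sequence $\{f_k:X\rightarrow 2^Y\}$: it presupposes maps already defined on $X\times[0,1]$ and on $2^X$, so invoking it here is circular; in the paper it is used only in the $\Phi$ direction, to obtain the radius $\gamma_n$. Without the nearest-point maps (and the verification that they commute up to homotopy with the bonding maps $q_{n,n+1}$, again via Lemma \ref{lem:unionhomotopica}), the inverse functor and hence both composite identities $\Psi\circ\Phi=\mathrm{id}$ and $\Phi\circ\Psi=\mathrm{id}$ are not established.
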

For finite topological spaces there exists the notion of core. We recall it for completeness. 
\begin{df} Let $X$ be a finite topological space and $x\in X$. It is said that $x$ is a down (up) beat point if $U_x\setminus \{x \}$ ($F_x\setminus \{x \}$) has a maximum  (minimum).
\end{df}
\begin{prop}\label{prop:core} If $X$ is a finite topological space and $x\in X$ is a beat point, then $X\setminus \{ x\}$ is a strong deformation retract of $X$.
\end{prop}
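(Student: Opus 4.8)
The plan is to treat the two cases of the definition separately, exploiting the order-duality between them, and in each case to produce an explicit order-preserving retraction together with a homotopy arising from a pointwise comparison of maps.

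Suppose first that $x$ is a down beat point, so that $U_x \setminus \{x\}$ has a maximum $y$; thus $y < x$ and every $z \in X$ with $z < x$ satisfies $z \le y$. I would define $r : X \to X \setminus \{x\}$ by $r(x) = y$ and $r(z) = z$ for $z \ne x$, and first check that $r$ is order-preserving, hence continuous: the only comparabilities needing attention involve $x$, and these follow from the defining property of $y$ as the maximum of $U_x \setminus \{x\}$. Writing $i : X \setminus \{x\} \hookrightarrow X$ for the inclusion, one has $r \circ i = \mathrm{id}_{X \setminus \{x\}}$ immediately. The point is then that $i \circ r$ and $\mathrm{id}_X$ agree everywhere except at $x$, where $(i\circ r)(x) = y < x$; hence $(i \circ r)(w) \le \mathrm{id}_X(w)$ for every $w \in X$.

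The key step is the standard fact for finite spaces that two continuous maps $f, g : Z \to X$ satisfying $f(w) \le g(w)$ for all $w$ are homotopic. I would prove this by introducing the Sierpiński space $S = \{0 < 1\}$ and the map $G : Z \times S \to X$ given by $G(w,0) = f(w)$, $G(w,1) = g(w)$, which is order-preserving precisely because $f \le g$ pointwise and both maps are themselves order-preserving; composing $G$ with $\mathrm{id}_Z \times \alpha$, where $\alpha : [0,1] \to S$ sends $[0,1)$ to $0$ and $1$ to $1$ (a continuous map), yields a genuine homotopy $H : Z \times [0,1] \to X$ from $f$ to $g$. Applying this with $f = i \circ r$ and $g = \mathrm{id}_X$ gives $i \circ r \simeq \mathrm{id}_X$; moreover, since $f$ and $g$ coincide on $X \setminus \{x\}$, the resulting $H$ satisfies $H(z,t) = z$ for all $t$ whenever $z \ne x$, so the homotopy is stationary on $X \setminus \{x\}$. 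This is exactly the assertion that $X \setminus \{x\}$ is a strong deformation retract of $X$.

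For the up beat point case, $F_x \setminus \{x\}$ has a minimum $y$ with $x < y$, and the same argument runs with the inequalities reversed: the retraction $r(x) = y$ now satisfies $\mathrm{id}_X \le i \circ r$ pointwise, and the comparison lemma applies verbatim. The main obstacle is not the combinatorics of the retraction, which are routine, but verifying the continuity underlying the comparison lemma—that is, checking that $G$ (equivalently the induced map $Z \times [0,1] \to X$) is continuous; once the Sierpiński-space device is in place, this reduces to a short order-preservation check.
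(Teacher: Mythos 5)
Your argument is correct, and it is exactly the classical proof of Stong's theorem that the paper recalls without proof (it is the argument given in the cited references \cite{barmak2011algebraic,may1966finite}): the explicit order-preserving retraction sending the beat point to the witness $y$, together with the standard lemma that pointwise comparable continuous maps between finite spaces are homotopic rel the set where they agree, is precisely the intended route. The Sierpi\'nski-space device for the comparison lemma is a valid way to establish the continuity of the homotopy, so there is nothing to add.
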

A finite space is a \textbf{minimal finite space} if it does not have beat points. The \textbf{core} of a finite space $X$ is the resulting space after removing one by one beat points until there are no more. We generalize this notion to inverse sequences of finite spaces.
\begin{df} Let $(X_n,t_{n,n+1})$ be an inverse sequence of finite topological spaces. Suppose $C_n$ is the core of $X_n$ and $r_n:X_n\rightarrow C_n$ is a retraction satisfying that $r_n\circ i_n=\textnormal{id}_{C_n}$ and $i_n\circ r_n$ is homotopic to $\textnormal{id}_{X_n}$, where $i_n:C_n\rightarrow X_n$ denotes the inclusion. We say that the core of $(X_n,t_{n,n+1})$ is $(C_n,r_{n}\circ t_{n,n+1}\circ i_{n+1})$ and write $C(X_n,t_{n,n+1})=(C_n,r_{n}\circ t_{n,n+1}\circ i_{n+1})$.
\end{df}
By Proposition \ref{prop:core} we get that every finite space and its core are isomorphic in $HTop$, we prove the analogue result for inverse sequences of finite spaces and their cores.
\begin{thm}\label{thm:CoreFAsigualFAS} Let $(X_{n},q_{n,n+1})$ be an inverse sequence of finite topological spaces. Then $(X_{n},q_{n,n+1})$ is isomorphic to $C(X_{n},q_{n,n+1})$ in pro-$HTop$.
\end{thm}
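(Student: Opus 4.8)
The plan is to exhibit explicit mutually inverse isomorphisms between the two inverse sequences by means of level morphisms in pro-$HTop$. Write $\tilde q_{n,n+1}=r_n\circ q_{n,n+1}\circ i_{n+1}$ for the bonding maps of $C(X_n,q_{n,n+1})=(C_n,\tilde q_{n,n+1})$. Since the index set is $\mathbb{N}$, it suffices to produce two families of homotopy classes $\mathbf{r}=\{[r_n]:X_n\rightarrow C_n\}$ and $\mathbf{i}=\{[i_n]:C_n\rightarrow X_n\}$ that commute with the bonding maps up to homotopy (so that each is a level morphism) and then to check that the two composites are the respective identities.

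First I would verify that $\mathbf{r}$ is a morphism $(X_n,q_{n,n+1})\rightarrow(C_n,\tilde q_{n,n+1})$ in pro-$HTop$, which amounts to checking that the square with horizontal maps $q_{n,n+1}$, $\tilde q_{n,n+1}$ and vertical maps $r_{n+1}$, $r_n$ commutes in $HTop$. Indeed $\tilde q_{n,n+1}\circ r_{n+1}=r_n\circ q_{n,n+1}\circ i_{n+1}\circ r_{n+1}\simeq r_n\circ q_{n,n+1}$, where the homotopy uses the relation $i_{n+1}\circ r_{n+1}\simeq\mathrm{id}_{X_{n+1}}$ from the definition of the core. Symmetrically, I would check that $\mathbf{i}$ is a morphism $(C_n,\tilde q_{n,n+1})\rightarrow(X_n,q_{n,n+1})$: here $q_{n,n+1}\circ i_{n+1}\simeq i_n\circ r_n\circ q_{n,n+1}\circ i_{n+1}=i_n\circ\tilde q_{n,n+1}$, now using $i_n\circ r_n\simeq\mathrm{id}_{X_n}$. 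Both verifications are immediate consequences of the two defining relations $r_n\circ i_n=\mathrm{id}_{C_n}$ and $i_n\circ r_n\simeq\mathrm{id}_{X_n}$, which are guaranteed by Proposition \ref{prop:core}.

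Finally I would show that $\mathbf{r}$ and $\mathbf{i}$ are mutually inverse. Since both are level morphisms, their composites are computed levelwise. The composite $\mathbf{r}\circ\mathbf{i}$ is represented by $\{[r_n\circ i_n]\}=\{[\mathrm{id}_{C_n}]\}$, which is exactly the identity of $(C_n,\tilde q_{n,n+1})$. The composite $\mathbf{i}\circ\mathbf{r}$ is represented by $\{[i_n\circ r_n]\}$; because $i_n\circ r_n\simeq\mathrm{id}_{X_n}$, each class $[i_n\circ r_n]$ equals $[\mathrm{id}_{X_n}]$ in $HTop$, so this composite is the identity of $(X_n,q_{n,n+1})$. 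Hence $\mathbf{r}$ and $\mathbf{i}$ are inverse isomorphisms in pro-$HTop$.

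The computations themselves are routine; the only genuine care needed lies in the bookkeeping of the level-morphism framework for pro-$HTop$, namely confirming that a family of homotopy classes commuting with the bonding maps in $HTop$ does represent a pro-morphism and that composition of such level morphisms is the levelwise composition. Once this framework is in place, everything reduces to the two identities furnished by the core construction, so I do not anticipate a substantive obstacle beyond making that verification precise.
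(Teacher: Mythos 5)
Your proof is correct, but it follows a different route from the paper's. You exhibit the retractions $\{[r_n]\}$ and the inclusions $\{[i_n]\}$ as two level morphisms and check directly that they are mutually inverse, using $r_n\circ i_n=\mathrm{id}_{C_n}$ and $i_n\circ r_n\simeq \mathrm{id}_{X_n}$; the only point requiring care, as you note, is that composition of level morphisms in pro-$HTop$ is computed levelwise and that levelwise homotopic representatives give the same pro-morphism, both of which are standard. The paper instead only introduces the inclusion morphism $(i_n,i)$, constructs the index-shifting maps $g_n=r_n\circ q_{n,n+1}:X_{n+1}\rightarrow C_n$ making the two triangles commute up to homotopy, and then invokes Morita's lemma to conclude that $(i_n,i)$ is an isomorphism. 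The two arguments rest on exactly the same two identities from Proposition \ref{prop:core}, so the difference is one of packaging: your version is more elementary and self-contained (no appeal to Morita's lemma), and it is available here precisely because the retraction exists at every level with the same index; the paper's Morita-style argument is the general pro-category technique, which would still apply in situations where one can only produce maps that shift the index (and it is the pattern reused in Remark \ref{rem:strongdeformationretractcomoelcore}).
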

\begin{proof}
We fix notation, $C(X_{n},q_{n,n+1})=(C_n,h_{n,n+1})$ where $h_{n,n+1}=r_{n}\circ t_{n,n+1}\circ i_{n+1}$. There is a natural morphism in pro-$HTop$ between $(C_n,h_{n,n+1})$ and $(X_{n},q_{n,n+1})$ induced by the inclusions, that is, $i:\mathbb{N}\rightarrow \mathbb{N}$ is the identity map and $i_n:C_n\rightarrow X_n$ is the inclusion. It is trivial to check that $(i_n,i)$ is a well-defined morphism since the following diagram is commutative up to homotopy.
\[
  \begin{tikzcd}[row sep=large,column sep=huge]
C_n \arrow{d}{i_{n}}  &  C_{n+1}\arrow{d}{i_{n+1}} \arrow[l,"h_{n,n+1}"'] \\
X_n  & X_{n+1} \arrow{l}{q_{n,n+1}}
  \end{tikzcd}
\]

We now construct a sequence $\{g_n:X_{n+1}\rightarrow C_n\}_{n\in \mathbb{N}}$ of continuous maps making the following diagram commutative up to homotopy.
\[
  \begin{tikzcd}[row sep=large,column sep=huge]
C_n \arrow{d}{i_n}  &  C_{n+1}\arrow{d}{i_{n+1}} \arrow[l,"h_{n,n+1}"'] \\
X_n  & X_{n+1} \arrow{l}{q_{n,n+1}} \arrow{lu}{g_n}
  \end{tikzcd}
\]
For every $n\in \mathbb{N}$ we define $g_n:X_{n+1}\rightarrow C_n$ by $g_{n}=r_n\circ q_{n,n+1}$. By construction, we have $g_n\circ i_{n+1}=r_n\circ q_{n,n+1}\circ i_{n+1}$ and $h_{n,n+1}=r_{n}\circ q_{n,n+1}\circ i_{n+1}$, which yields the commutativity of the first triangle. We also have that $i_{n}\circ g_n=i_n\circ r_n\circ q_{n,n+1}$. Therefore, $i_{n}\circ g_n$ is homotopic to $q_{n,n+1}$ and we get the commutativity up to homotopy of the  second triangle. By Morita's lemma (see \cite{morita1974hurewicz} or \cite[Chapter 2, Theorem 5]{mardevsic1982shape}), we get the desired result.  
\end{proof}

\begin{rem}\label{rem:strongdeformationretractcomoelcore}Let $(X_n,q_{n,n+1})$ be an inverse sequence of finite spaces. Suppose $L_n$ is a strong deformation retract of $X_n$ for every $n\in \mathbb{N}$. Following the same arguments used before, we can obtain an inverse sequence where the terms are given by $L_n$. Repeating the proof of Theorem \ref{thm:CoreFAsigualFAS}, it can be deduced that this new inverse sequence is isomorphic to $(X_n,q_{n,n+1})$ in pro-$HTop$. 
\end{rem}

\begin{ex} We consider $X=\{A,B \}$ and declare that $A<B$. Let $X^n$ denote the $n$-th finite barycentric subdivision of $X$. The finite barycentric subdivision of $X$ can be seen as the poset given by the chains of $X$ where the partial order is given by the subset relation. We have a natural inverse sequence given by $(X^n,h_{n,n+1})$, where $h_{n,n+1}:X^{n+1}\rightarrow X^n$ is given by $h(x_1<...<x_m)=x_m$ (see \cite{barmak2011algebraic} or \cite{may1966finite} for more details). It is easily seen that the core of $X^n$ is a space with one point for every $n\in \mathbb{N}$. This implies that $(X^n,h_{n,n+1})$ is isomorphic to  $(*,\text{id})$. In Figure \ref{fig:coreInversesystem} we have an schematic representation.
\begin{figure}[h]
\centering
\includegraphics[scale=0.9]{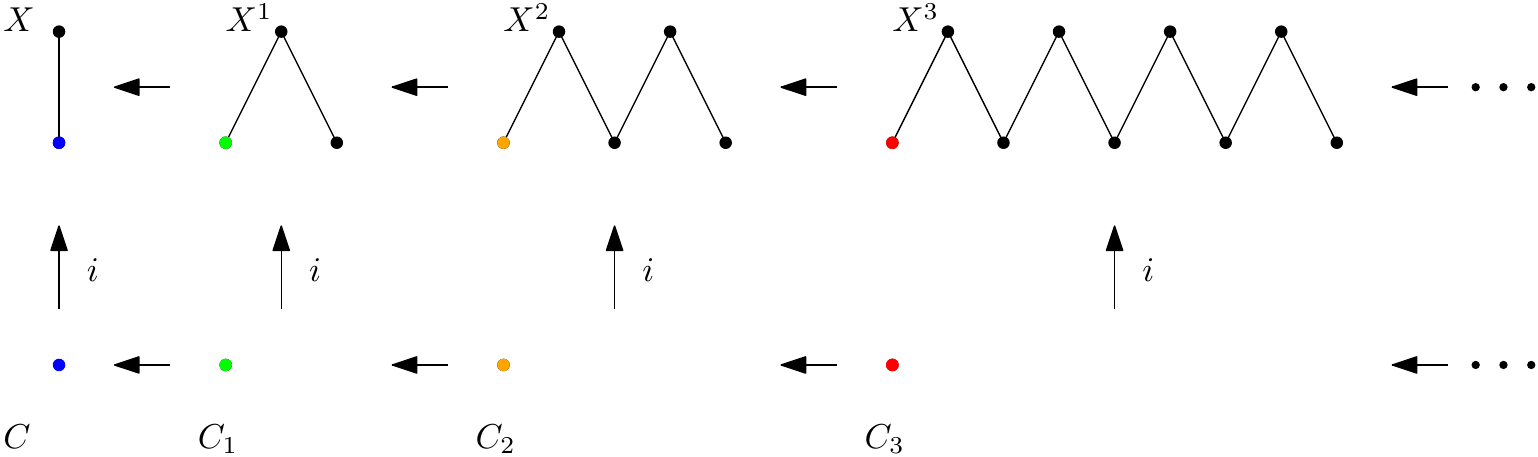}
\caption{Schematic illustration of the inverse sequence $(X^n,h_{n,n+1})$ and its core.}\label{fig:coreInversesystem}

\end{figure} 
\end{ex}
The core of an inverse sequence may be used to show that two compact metric spaces have the same shape, we give an example of this and also illustrate the way we can use different constructions of finite spaces (see Remark \ref{rem:mainConstruction}).

\begin{ex}\label{ex:curve} We consider the computational model of the topologist's sine curve $S$, that is, 
\begin{align*}
S=a_\infty\cup (\cup_{n\geq 1} \overline{b}_n) \cup(\cup_{n\geq 1} \underline{b}_n)\cup (\cup_{n\geq 0} a_n).
\end{align*}
where
\begin{align*}
\overline{b}_n & =(\frac{1}{2^{2n-1}},\frac{1}{2})-(\frac{1}{2^{2n-2}},\frac{1}{2}) \quad n\geq 1 , \\
\underline{b}_n & =(\frac{1}{2^{2n}},\frac{1}{2})-(\frac{1}{2^{2n-1}},0) \quad n\geq 1 ,\\
a_n & = (\frac{1}{2^n},\frac{1}{2})-(\frac{1}{2^n},0) \quad n\geq 0 ,\\
a_\infty &=(0,\frac{1}{2})-(0,0),
\end{align*}
$n\in \mathbb{N}$ and $(a,b)-(c,d)$ denotes the segment joining the point $(a,b)$ with $(c,d)$ (see Figure \ref{fig:computationaltopologistsinecurve}). The metric of $S$ is the one inherited as a subspace of $\mathbb{R}^2$. We get the Main Construction for $S$ and study at the same time its core.

\begin{figure}[h]
 
  \centering
    \includegraphics[scale=0.6]{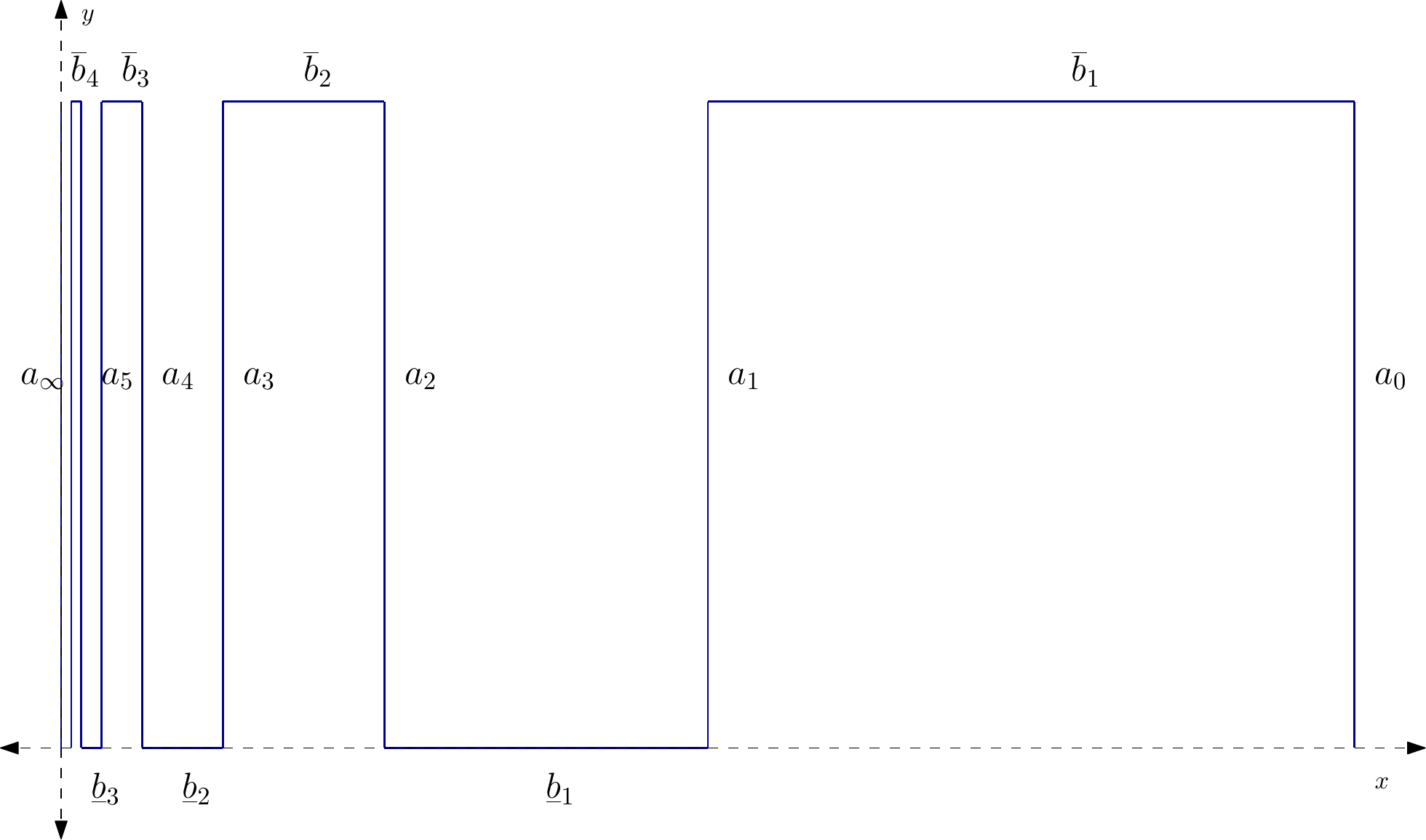}
     \caption{The computational topologist's sine curve.}\label{fig:computationaltopologistsinecurve}
\end{figure}

\underline{Step 1.} The diameter of $S$ is $\frac{\sqrt{2}}{2}$, so we can consider $\epsilon_1=\sqrt{5}$, $A_1=\{(0,\frac{1}{4}) \}$ and $\mathcal{U}_{2\epsilon_{1}}(A_1)=A_1$.  The core of $\mathcal{U}_{2\epsilon_{1}}(A_1)$ is also $A_1$.

\underline{Step 2.} We consider $\epsilon_2=\frac{\sqrt{2}}{2^3}<\frac{\epsilon_1}{2}$, the grid $G_2=\{(\frac{l}{2^{3-1}},\frac{k}{2^{3-1}})\in \mathbb{R}^2| l,k\in \mathbb{Z}\}$ and the intersection of $G_2$ with $S$. There are two points in $a_3$ that are at distance $\epsilon_2$ to $G_2\cap S$, which are
$b_1=(\frac{1}{2^3},\frac{1}{2^3}) $ and $ b_2=(\frac{1}{2^3},\frac{1}{2^3}+\frac{1}{2^2}) $. If we add $(0,\frac{1}{2^3})$ and $(0,\frac{1}{2^3}+\frac{1}{2^2})$ to $G_2\cap T$, then we get an $\epsilon_2$-approximation
$$A_2=G_2\cap S \cup \{ (0,\frac{1}{2^3}),(0,\frac{1}{2^3}+\frac{1}{2^2}) \}.$$

\begin{figure}[!htb]
   \begin{minipage}{0.42\textwidth}
   \centering
     \includegraphics[scale=0.4]{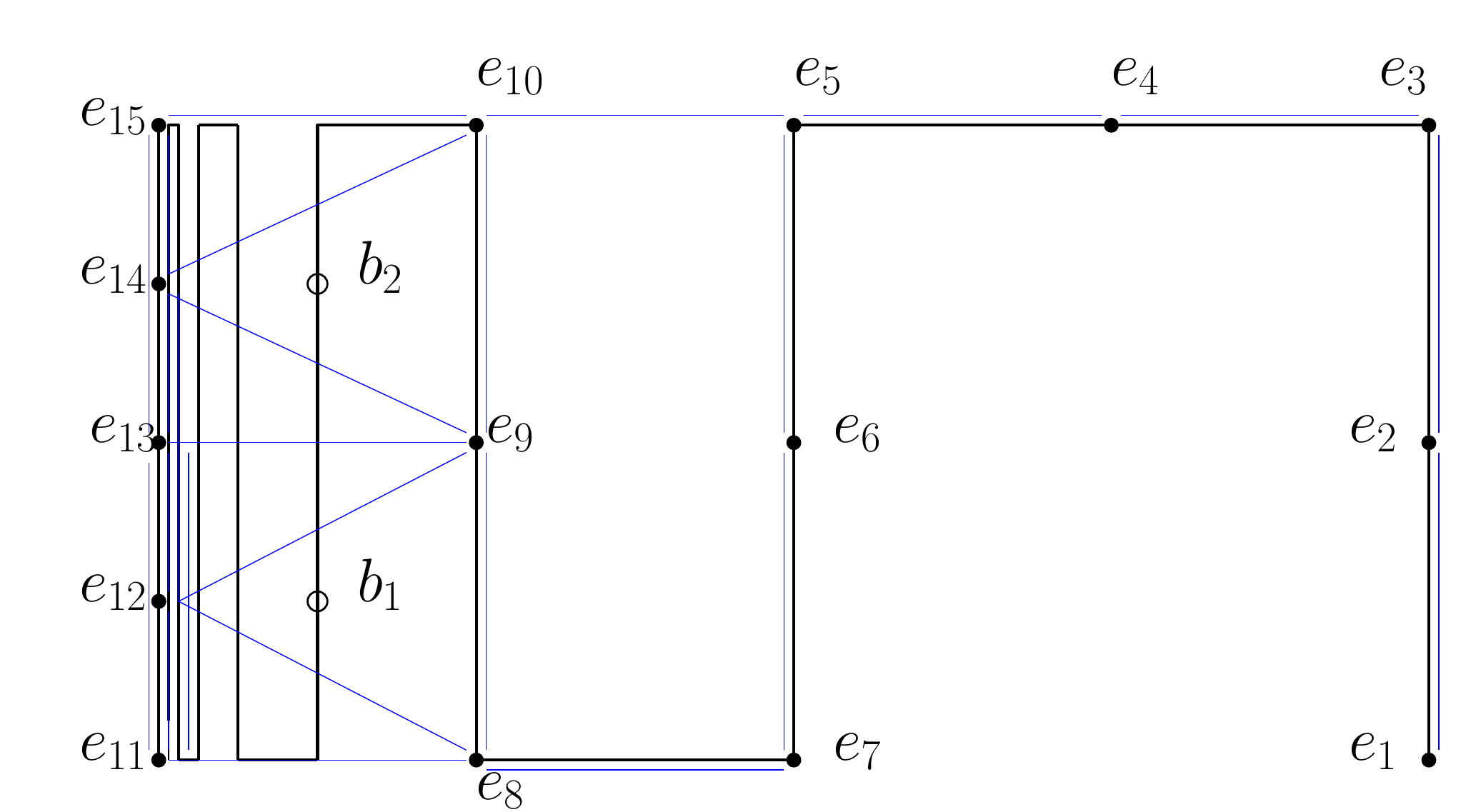}
\caption{$\epsilon_2$-approximation $A_2$.}\label{fig:A_2Aproximacion}
   \end{minipage}\qquad \qquad \qquad
   \begin{minipage}{0.42\textwidth}
   \centering
     \includegraphics[scale=0.4]{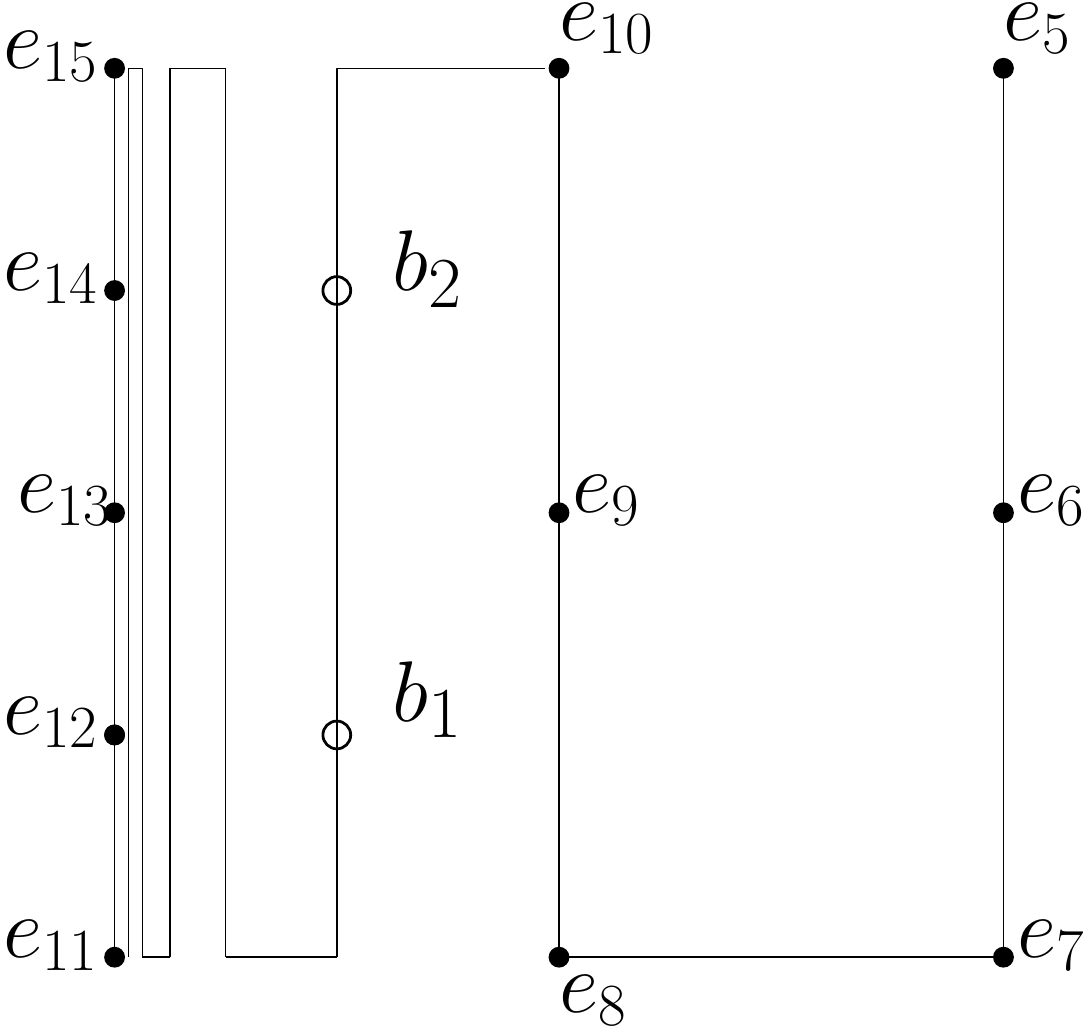}
\caption{The set of points $A_2'$ in $S$.}\label{fig:A_2AproximacionCore}
   \end{minipage}
\end{figure}

We write $B_2=\{ x\in A_2| x\in \overline{b}_1\setminus{a_1} $ or  $x\in a_0 \}$, that is, the points of $A_2$ that lie to the right of $a_1$. It is easy to observe that $\mathcal{U}_{2\epsilon_2}(A'_2)$ is a strong deformation retract of $\mathcal{U}_{2\epsilon_2}(A_2)$, where $A_2'=A_2\setminus B_2$. The last assertion is an immediate consequence of the construction we made of $A_2$ and the value that we have chosen for $\epsilon_2$. Suppose $C\in \mathcal{U}_{2\epsilon_2}(A_2)$ contains points of the approximation that lie in $B_2$, which means that $C$ is of the form $C=\{e_k,e_{k+1}\}$ or $C=\{e_k\}$. Then $\{e_1\}$ is an up beat point because $F_{\{e_1\}}\setminus \{e_1\} =\{e_1,e_2\}$, so we can remove it without changing the homotopy type of $\mathcal{U}_{2\epsilon_2}(A_2)$. Now, $\{e_1,e_2\}$ is a down beat point since $U_{\{e_1,e_2\}}\setminus \{e_1,e_2\} =\{e_2 \}$. Therefore, we can remove it. We can proceed recursively until the point $\{e_5\}$. This point satisfies that $d(e_5,e_6),d(e_5,e_{10})=\frac{1}{4}<2\epsilon_2=\frac{\sqrt{2}}{2^2}$, so $\{e_5,e_6 \}$, $\{e_5,e_{10} \}\in \mathcal{U}_{2\epsilon_2}(A_2)$, which implies that $\{ e_5\}$ is not an up beat point. On the other hand, $\{ e_5,e_6\}$ and $\{e_5,e_{10} \}$ are clearly not down beat points. A similar argument can be made with the rest of the points in $A_2$ that lie in $a_1$. In addition, the map $p_{1,2}$ trivially sends every $C\in \mathcal{U}_{2\epsilon_n}(A'_2)$ to $A_1$. In Figure \ref{fig:A_2Aproximacion} we present $A_2$ and arcs in blue to represent points of $ \mathcal{U}_{2\epsilon_n}(A_2)$ that have cardinal equal to $2$. In Figure \ref{fig:A_2AproximacionCore} we present $A_2'$. 

\underline{Step 3.} We consider $\epsilon_3=\frac{\sqrt{2}}{2^6}<\frac{\epsilon_2}{2}$, the grid $G_3=\{(\frac{l}{2^{6-1}},\frac{k}{2^{6-1}})\in \mathbb{R}^2| l,k\in \mathbb{Z}\}$ and the intersection of $G_3$ with $S$. There are $16$ points that are at distance $\epsilon_3$ to $A_3$, these points lie in $a_6$. Concretely,
$$\{(\frac{1}{2^6},\frac{2k+1}{2^6}) |k=0,1,2...,15\}.$$
We add points of $a_{\infty}$ to get an $\epsilon_3$-approximation, i.e., 
$$A_2=(G_3\cap S)\cup \{(0,\frac{2k+1}{2^6}) |k=0,1,2...,15\}. $$
We consider $B_3=\{x\in A_2|x\in a_l$ with $l=0,1,2,3$ or $x\in \overline{b}_i$ with $ i=1,2 $ or $x\in \underline{b}_1$ or $x\in \underline{b}_2\setminus a_4$  $\}$, i.e., the points of $A_3$ that lie to the right of $a_4$. We enumerate from right to left the points of $A_3$, see Figure \ref{fig:aproxtopologistssincurve3}. We have that $\{e_1\}$ is only covered by $\{e_1,e_2\}$, so we can remove it. Now, $\{e_1,e_2\}$ only covers $\{e_2\}$, so it is a down beat point and we can remove it. If we continue in this fashion, we get that $\mathcal{U}_{2\epsilon_3}(A'_3)$ is a strong deformation retract of  $\mathcal{U}_{2\epsilon_3}(A_3)$ where $A_3'=A_3\setminus B_3$. Suppose $x$ is a point of $A_3$ that lie in $a_4$. Then there exist points in $a_5$ and $a_4$ that are at distance less than $2\epsilon_3$ to $x$. Notice that every $C\in \mathcal{U}_{2\epsilon_3}(A_3')$ contains points lying in $a_4$ or $a_5$. In Figure \ref{fig:A_3AproximacionCore} we have depicted $A_3'$.

\begin{figure}[!htb]
   \begin{minipage}{0.48\textwidth}
     \centering
     \includegraphics[scale=0.45]{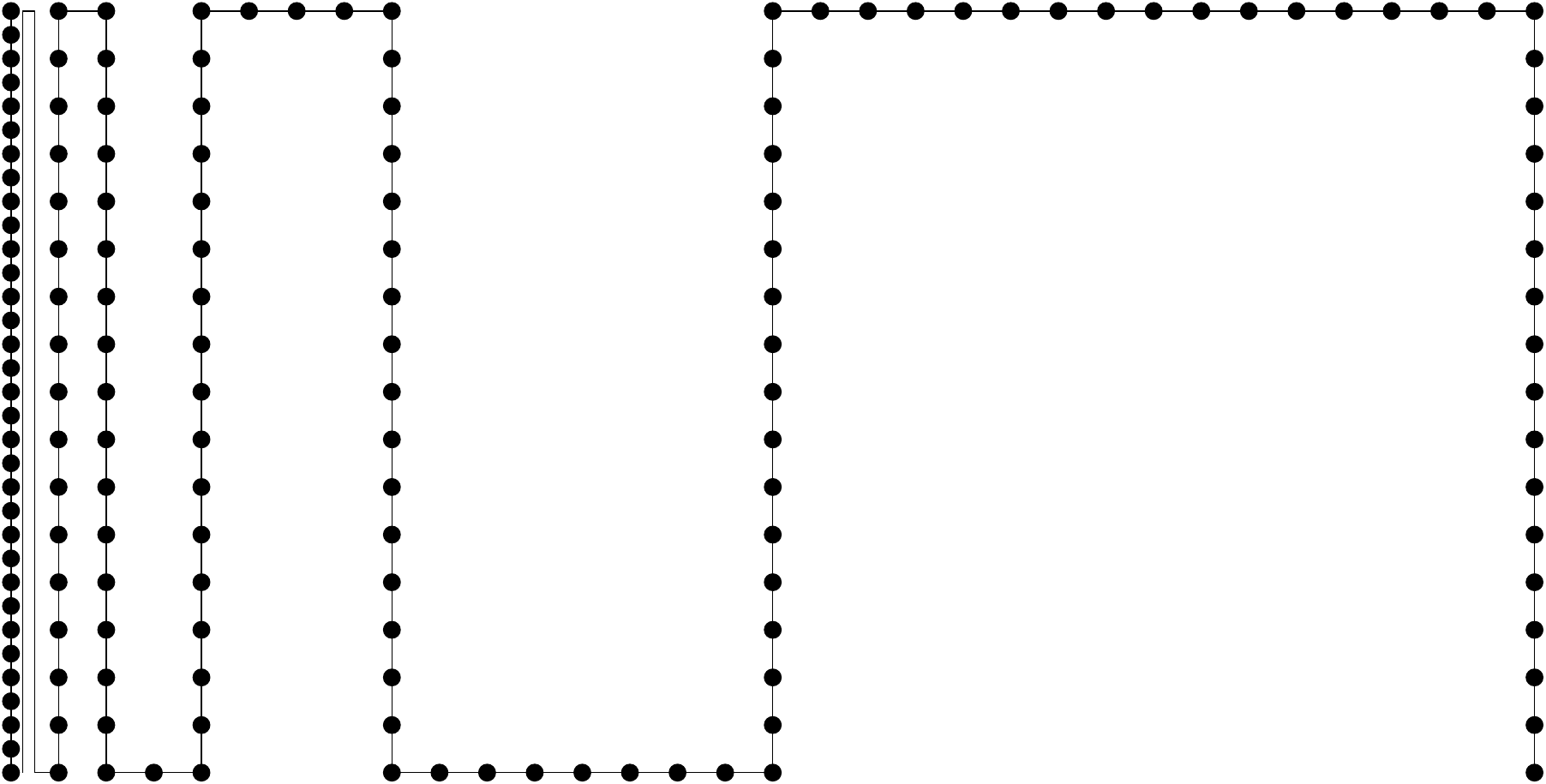}
\caption{$\epsilon_3$-approximation $A_3$ for $S$.}\label{fig:aproxtopologistssincurve3}
   \end{minipage}\hfill \quad \quad\quad
   \begin {minipage}{0.48\textwidth}
     \centering
     \includegraphics[scale=0.45]{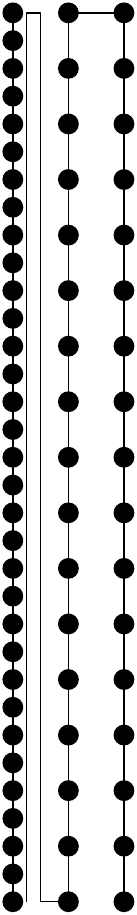}
\caption{The set of points $A_3'$ in $S$.}\label{fig:A_3AproximacionCore}
   \end{minipage}
\end{figure}
Furthermore, the image of the map $p_{2,3}:\mathcal{U}_{2\epsilon_3}(A_3')\rightarrow \mathcal{U}_{2\epsilon_2}(A_2)$ is $\mathcal{U}_{2\epsilon_2}(\overline{A}_2)\subset \mathcal{U}_{2\epsilon_2}(A_2')$, where $\overline{A}_2=\{x\in A_2|x\in a_\infty \}$. On the other hand, after some routine computations we may get that $\mathcal{U}_{2\epsilon_2}(\overline{A}_2)$ is contractible to $\{(0,\frac{1}{4})\}$. 

\underline{Step n.} We consider $\epsilon_n=\frac{\sqrt{2}}{2^{3n-3}}$, the grid $G_n=\{ (\frac{l}{2^{3n-4}},\frac{k}{2^{3n-4}})\in \mathbb{R}^2 | l,k\in \mathbb{Z}\}$ and the intersection of $G_n$ with $S$. There are $2^{3n-5}$ points that lie in $a_{3n-3}$ such that the distance to $G_n\cap S$ is exactly $\epsilon_n$. If we add the following points to $G_n\cap S$, then we get an $\epsilon_n$-approximation
$$A_n=(G_n\cap S )\cup \{(0,\frac{2k+1}{2^{3n-3}})|k=0,1...,2^{3n-4}-1 \}.$$
It is simple to show that $\mathcal{U}_{2\epsilon_n}(A_n)$ is homotopy equivalent to $\mathcal{U}_{2\epsilon_n}(A_n')$, where $A_n'=A_n\setminus B_n$ and $B_n$ consists of points in $A_n$ that lie to the right of $a_{3n-5}$. 

In addition, $p_{n-1,n}$ sends $\mathcal{U}_{2\epsilon_n}(A_n')$ to $\mathcal{U}_{2\epsilon_{n-1}}(\overline{A}_{n-1})\subset \mathcal{U}_{2\epsilon_{n-1}}(A_{n-1})$, where $\overline{A}_{n-1}=\{ x\in A_{n-1}| x\in a_{\infty}\}$. It is routine to check that $\mathcal{U}_{2\epsilon_{n-1}}(\overline{A}_{n-1})$ is contractible to $\{ (0,\frac{1}{4})\}$. 

\underline{Shape of the topologist's sine curve.} By Remark \ref{rem:mainConstruction}, $(\mathcal{U}_{2\epsilon_n}(A_n),p_{n,n+1})$ is isomorphic to every finite approximation of $S$ and $(\mathcal{U}_{2\epsilon_n}(\overline{A}_n),p_{n,n+1_{|\mathcal{U}_{2\epsilon_n}(\overline{A}_n)}})$ is isomorphic to every finite approximation of $[0,\frac{1}{2}]$ in pro-$HTop$. Thus, $S$ and $[0,\frac{1}{2}]$ are isomorphic in the shape category by Theorem \ref{thm:shapeTheorem}.
\end{ex}

We have obtained a ``computational'' description of shape theory based on finite topological spaces that are obtained from finite samples in compact metric spaces. From an algebraic point of view, this result is not surprising (see Proposition \ref{prop:reconstruccionHomologia}). Nevertheless, every finite connected space has trivial shape (see \cite{chocano2021shape}). Given a compact metric space $X$ and a finite approximation $(\mathcal{U}_{4\epsilon_n}(A_n),q_{n,n+1})$ of it, we can apply other functors. For instance, the functor $\mathcal{K}$ considered in Section \ref{sec:preliminares}. In that manner, more connections with shape theory can be found (see the notion of $HPol$-expansion in \cite{mardevsic1982shape}). 
\begin{prop}\label{prop:Hpolexpansion} Let $X$ be a compact metric space and let $(\mathcal{U}_{4\epsilon_n}(A_n),q_{n,n+1})$ be a finite approximation of $X$. Then $(\mathcal{K}(\mathcal{U}_{4\epsilon_n}(A_n)),\mathcal{K}(q_{n,n+1}))$ is a $HPol$-expansion of $X$. 
\end{prop}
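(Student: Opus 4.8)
The plan is to recognize the polyhedra $|\mathcal{K}(\mathcal{U}_{4\epsilon_n}(A_n))|$ as nerves of fine covers of $X$ and then invoke the classical fact that the nerves of a cofinal family of finite covers of a compact metric space, with meshes tending to $0$, form an $HPol$-expansion. First I would record a purely combinatorial identification: since the condition $\mathrm{diam}(C)<4\epsilon_n$ is inherited by subsets, the poset $\mathcal{U}_{4\epsilon_n}(A_n)$ is exactly the face poset $\mathcal{X}(R_n)$ of the Vietoris--Rips complex $R_n$ on the finite set $A_n$ at scale $4\epsilon_n$ (whose simplices are the nonempty subsets of $A_n$ of diameter $<4\epsilon_n$). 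Consequently $\mathcal{K}(\mathcal{U}_{4\epsilon_n}(A_n))=\mathcal{K}(\mathcal{X}(R_n))$ is the barycentric subdivision of $R_n$, so $|\mathcal{K}(\mathcal{U}_{4\epsilon_n}(A_n))|$ is homeomorphic to $|R_n|$, a compact polyhedron and hence an ANR. Thus $(\mathcal{K}(\mathcal{U}_{4\epsilon_n}(A_n)),\mathcal{K}(q_{n,n+1}))$ is an inverse sequence in $HPol$, and it remains to produce a morphism $\mathbf{p}\colon X\to \mathbf{P}$ in pro-$HTop$ and to verify the two expansion conditions of \cite{mardevsic1982shape}.

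Next I would build the structure map. Because $A_n$ is an $\epsilon_n$-approximation, the balls $\{\mathcal{B}(a,2\epsilon_n)\cap X\}_{a\in A_n}$ form a finite open cover $\mathcal{V}_n$ of $X$ of mesh at most $4\epsilon_n$; a partition of unity subordinate to $\mathcal{V}_n$ defines the canonical map $p_n\colon X\to |R_n|\cong |\mathcal{K}(\mathcal{U}_{4\epsilon_n}(A_n))|$ into the nerve, the support condition being compatible with $R_n$ precisely because two balls of radius $2\epsilon_n$ meet only when their centers are at distance $<4\epsilon_n$. Since $\epsilon_{n+1}<\epsilon_n/2$, the cover $\mathcal{V}_{n+1}$ refines $\mathcal{V}_n$, so $p_n$ is homotopic to the composite of $p_{n+1}$ with the canonical projection of nerves; the remaining point is to check that this canonical projection agrees up to homotopy with the realized bonding map $|\mathcal{K}(q_{n,n+1})|$, which I would obtain from a contiguity argument using the explicit formula $q_{n,n+1}(C)=\bigcup_{c\in C}\mathcal{B}(c,\epsilon_n)\cap A_n$. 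Together with $\epsilon_n\to 0$, this makes $\mathbf{p}=(p_n)$ a well-defined morphism in pro-$HTop$.

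Finally I would conclude that $\mathbf{p}$ is an $HPol$-expansion. Since each term is a compact ANR it suffices (Marde\v{s}i\'c--Segal) to check that $\mathbf{p}$ is an ANR-resolution, i.e.\ the two conditions: every map of $X$ into an ANR factors, up to an arbitrary open cover of the target, through some $p_n$; and any two maps out of a fixed nerve that become close after composing with $p_n$ become close after a suitable bonding map. Both follow from the mesh of $\mathcal{V}_n$ tending to $0$ together with the compactness of $X$, exactly as in the proof that the \v{C}ech system of nerves is an expansion. The main obstacle is the matching flagged in the previous paragraph: identifying the paper's combinatorial bonding maps $\mathcal{K}(q_{n,n+1})$ with the classical canonical projections between nerves up to homotopy, and handling the fact that $R_n$ is a Vietoris--Rips complex rather than a \v{C}ech nerve, so that the standard expansion theorem applies. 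Here the reconstruction results of \cite{mondejar2015hyperspaces,chocano2021computational}, which exhibit $X$ as a strong deformation retract of $\varprojlim \mathcal{U}_{4\epsilon_n}(A_n)$, and the pro-$HTop$ invariance recorded in Remark \ref{rem:mainConstruction}, are the tools I would use to control these homotopies.
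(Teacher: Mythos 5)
Your route is genuinely different from the paper's: the paper disposes of this proposition in one line, as an immediate consequence of \cite[Theorem 12]{mondejar2015hyperspaces} together with \cite[Section 5]{chocano2021computational}, whereas you attempt a self-contained verification via canonical maps into nerves. Your opening identification is correct and worth making explicit: since the diameter condition is hereditary, $\mathcal{U}_{4\epsilon_n}(A_n)$ is precisely the face poset of the Vietoris--Rips complex $R_n$ on $A_n$ at scale $4\epsilon_n$, so $|\mathcal{K}(\mathcal{U}_{4\epsilon_n}(A_n))|$ is the realization of the barycentric subdivision of $R_n$ and in particular a compact polyhedron; this is what makes the statement about an $HPol$-expansion meaningful, and the paper leaves it implicit. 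The place where your argument, as written, does not yet close is exactly the obstacle you flag: the classical \v{C}ech expansion theorem applies to nerves of open covers, and the nerve of $\mathcal{V}_n=\{\mathcal{B}(a,2\epsilon_n)\cap X\}_{a\in A_n}$ is in general only a subcomplex of $R_n$ (pairwise intersection of the balls, i.e.\ pairwise distances $<4\epsilon_n$ among the centers, does not force a common point), so $R_n$ is the flag complex on the nerve's $1$-skeleton rather than the nerve itself. One then needs either an interleaving argument (Rips at scale $4\epsilon_n$ sandwiched between \v{C}ech complexes at comparable scales, with compatible bonding maps) or the reconstruction results you cite, which show directly that $X$ sits in $\varprojlim|\mathcal{K}(\mathcal{U}_{4\epsilon_n}(A_n))|$ as a strong deformation retract; the same goes for matching $\mathcal{K}(q_{n,n+1})$ with the canonical nerve projections up to contiguity. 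Since you end up invoking \cite{mondejar2015hyperspaces} and \cite{chocano2021computational} to control precisely these two points, your proof ultimately rests on the same external input as the paper's, but it buys a concrete geometric picture (Rips complexes, partitions of unity, mesh going to $0$) that the paper's citation hides; the cost is that carrying it out rigorously without those references would require the interleaving and contiguity arguments in full.
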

This result is an immediate consequence of \cite[Theorem 12]{mondejar2015hyperspaces} and \cite[Section 5]{chocano2021computational}. It also proves that we may obtain some shape invariants applying algebraic functors (see \cite[Chapter II]{mardevsic1982shape}). Concretely, we get homology pro-groups, \v{C}ech homology, \v{C}ech cohomology and shape groups.  
\begin{ex} Combining these techniques we may deduce whether two spaces are shape equivalent. Let us consider the the Cantor set $C$ and the topological subspace of $\mathbb{R}^2$ given by the union along one edge of two squares, that is, Example 3.4 and Example 3.3 respectively in \cite{chocano2021computational}. After applying homological functors to the finite approximations of these spaces it may be observed that they are not shape equivalent (just looking at Table 1 and Table 2 in \cite{chocano2021computational}).
\end{ex}

%

To conclude this section, we define the notion of height for compact metric spaces. The height of a finite topological space $X$, denoted by $ht(X)$, is one less than the maximum number of elements in a chain of $X$. The dimension of a finite simplicial complex $L$, denoted by $\text{dim}(L)$, is the maximum of dimension of the simplices of $L$. It is clear that $\text{dim}(\mathcal{K}(X))=ht(X)$. 

\begin{df}
Let $X$ be a compact metric space. We say that a finite approximation $U=(\mathcal{U}_{4\epsilon_n}(A_n),q_{n,n+1})$ of $X$ has height $ht(U)\leq m $ if $ht(\mathcal{U}_{4\epsilon_n}(A_n))\leq m$ for every $n\in \mathbb{N}$. We say that $X$ has height $ht(X)\leq m$ provided there exists an inverse sequence of finite topological spaces $V=(V_n,t_{n,n+1})$ isomorphic in pro-$HTop$ to a finite approximation $(\mathcal{U}_{4\epsilon_n}(A_n),q_{n,n+1})$ of $X$ satisfying $ht(V)\leq m$. We write $ht(X)=n$ provided  $n$ is the least $m$ for which $ht(X)\leq m$.
\end{df}

Notice that if two compact metric spaces $X$ and $Y$ are isomorphic in $\mathbb{E}$, then $ht(X)=ht(Y)$.

The shape dimension of a compact metric space $X$, denoted by $\text{sd}(X)$, is defined similarly using the dimension of simplicial complexes and $HPol$-expansions, see \cite[Chapter II, 1]{mardevsic1982shape}. The following result is an immediate consequence of the definitions.

\begin{prop} Let $X$ be a compact metric space. Then $\textnormal{sd}(X)\leq ht(X)$.
\end{prop}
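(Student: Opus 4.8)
The plan is to start from the definition of $ht(X)$, push the witnessing inverse sequence through the functor $\mathcal{K}$, and recognize the outcome as an $HPol$-expansion of $X$ of the correct dimension. If $ht(X)=\infty$ there is nothing to prove, so suppose $ht(X)=m<\infty$. By definition there is an inverse sequence $V=(V_n,t_{n,n+1})$ of finite spaces with $ht(V_n)\le m$ for every $n$ that is isomorphic in pro-$HTop$ to some finite approximation $U=(\mathcal{U}_{4\epsilon_n}(A_n),q_{n,n+1})$ of $X$. Applying $\lvert\mathcal{K}(-)\rvert$ levelwise yields the inverse sequence $(\lvert\mathcal{K}(V_n)\rvert,\lvert\mathcal{K}(t_{n,n+1})\rvert)$ of compact polyhedra, and since $\dim(\mathcal{K}(V_n))=ht(V_n)\le m$, every term has dimension at most $m$. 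Thus the only thing left to verify is that this sequence is an $HPol$-expansion of $X$; granting that, $X$ admits an $HPol$-expansion by polyhedra of dimension $\le m$, which is exactly $\operatorname{sd}(X)\le m=ht(X)$.

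First I would make precise that $\lvert\mathcal{K}(-)\rvert$ descends to a functor from the homotopy category of finite spaces to $HPol$. The point is that order-preserving maps that are homotopic as maps of finite spaces induce homotopic maps on the realizations of their order complexes; this rests on the standard fact that $f\le g$ pointwise forces $\lvert\mathcal{K}(f)\rvert\simeq\lvert\mathcal{K}(g)\rvert$, together with the functoriality recorded in Theorem \ref{thm:comutatividadDiagramaMcCord}. A functor of homotopy categories preserves isomorphisms and extends to the associated pro-categories preserving isomorphisms, so from $V\cong U$ in pro-$HTop$ I obtain $(\lvert\mathcal{K}(V_n)\rvert)\cong(\lvert\mathcal{K}(\mathcal{U}_{4\epsilon_n}(A_n))\rvert)$ in pro-$HPol$.

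Next I would invoke Proposition \ref{prop:Hpolexpansion}, which says that $(\lvert\mathcal{K}(\mathcal{U}_{4\epsilon_n}(A_n))\rvert,\lvert\mathcal{K}(q_{n,n+1})\rvert)$ is an $HPol$-expansion of $X$. Since the property of being an $HPol$-expansion of a fixed space is invariant under isomorphism in pro-$HPol$ (a standard property of expansions, see \cite{mardevsic1982shape}), the pro-$HPol$-isomorphic sequence $(\lvert\mathcal{K}(V_n)\rvert)$ is itself an $HPol$-expansion of $X$, and the argument closes.

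The step I expect to demand the most care is the transfer in the middle paragraph: one must be certain that $\lvert\mathcal{K}(-)\rvert$ is genuinely well defined on homotopy classes of maps of finite spaces, hence carries pro-$HTop$ isomorphisms to pro-$HPol$ isomorphisms, and that an $HPol$-expansion survives composition with a pro-$HPol$ isomorphism. Both are standard, yet they are precisely where the phrase \emph{immediate consequence of the definitions} conceals its content; everything else is bookkeeping with the identity $\dim(\mathcal{K}(X))=ht(X)$ and the definition of $ht(X)$.
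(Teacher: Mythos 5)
Your argument is correct and is exactly the elaboration the paper has in mind: the paper offers no proof beyond declaring the proposition ``an immediate consequence of the definitions,'' and your route --- push the witnessing sequence $V$ through $\lvert\mathcal{K}(-)\rvert$, use $\dim(\mathcal{K}(V_n))=ht(V_n)\le m$, invoke Proposition \ref{prop:Hpolexpansion}, and transfer the expansion property across the induced pro-$HPol$ isomorphism --- is the standard way to make that immediacy precise. The only cosmetic point is that the descent of $\lvert\mathcal{K}(-)\rvert$ to homotopy classes rests on the fence argument for homotopic maps of finite spaces ($f\le g$ implies $\lvert\mathcal{K}(f)\rvert\simeq\lvert\mathcal{K}(g)\rvert$) rather than on Theorem \ref{thm:comutatividadDiagramaMcCord}, which concerns the McCord maps.
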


\begin{ex} Let us consider the topologist's sine curve $S$ and the inverse sequences of finite posets considered in Example \ref{ex:curve}. Then $ht(X)=0$, which coincides with the shape dimension of $S$. 
\end{ex}


\section{Proof of Theorem \ref{thm:shapeTheorem}}\label{sec:resultadoPrincipal}

Given two compact metric spaces $X$ and $Y$, we choose finite approximations for them $T(X)=(\mathcal{U}_{4\delta_n}(A_n),q_{n,n+1})$ and $T(Y)=(\mathcal{U}_{4\epsilon_n}(B_n),q_{n,n+1})$. We prove that the set of morphisms in pro-$HTop$ between their finite approximations is in bijective correspondence with the set of shape morphisms between $X$ and $Y$. To this end, we first show some technical results.
\begin{lem}\label{lem:definiciondep:x} Let $(X,d)$  be a compact metric spaces and let $A$ be an $\epsilon$-approximation of $X$. Then the map $p:X\rightarrow \mathcal{U}_{4\epsilon}(A)$ given by $p(x)=\{a\in A|d(x,a)=d(x,A) \}$ is well-defined and continuous.
\end{lem}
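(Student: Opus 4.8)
The plan is to treat the two assertions separately: first that $p(x)$ genuinely lands in $\mathcal{U}_{4\epsilon}(A)$, and then that $p$ is continuous as a map into the finite $T_0$-space $\mathcal{U}_{4\epsilon}(A)$. For well-definedness, I would use that $A$ is finite, so the infimum $d(x,A)=\min_{a\in A}d(x,a)$ is attained and $p(x)$ is a non-empty subset of $A$. To place $p(x)$ in $\mathcal{U}_{4\epsilon}(A)$ it then only remains to bound its diameter: writing $r=d(x,A)$, the defining property of an $\epsilon$-approximation supplies some $a\in A$ with $d(x,a)<\epsilon$, whence $r<\epsilon$; since every element of $p(x)$ lies at distance exactly $r$ from $x$, the triangle inequality gives $d(a,a')\le 2r<2\epsilon<4\epsilon$ for all $a,a'\in p(x)$, so $\text{diam}(p(x))<4\epsilon$ and $p$ is well-defined.

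For continuity I would reduce to a purely local set-inclusion statement. Recall that in a finite $T_0$-space the minimal open sets $U_y$ form a basis, so a map into such a space is continuous at a point $x_0$ precisely when $p^{-1}(U_{p(x_0)})$ is a neighborhood of $x_0$. As $C\le D$ means $C\subseteq D$, the minimal open set here is $U_{p(x_0)}=\{C\in\mathcal{U}_{4\epsilon}(A): C\subseteq p(x_0)\}$. Hence it suffices to produce, for each $x_0\in X$, a radius $\delta>0$ with $p(x)\subseteq p(x_0)$ whenever $d(x,x_0)<\delta$. This is exactly the upper semicontinuity of the metric projection onto the finite set $A$, and I expect it to be the main obstacle of the argument.

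I would dispatch this obstacle by a gap estimate. Set $r_0=d(x_0,A)$ and $\eta=\min\{\,d(x_0,b)-r_0 : b\in A\setminus p(x_0)\,\}$; this is strictly positive because it is a minimum over a finite set of strictly positive quantities (if $A=p(x_0)$ the inclusion $p(x)\subseteq A=p(x_0)$ is automatic and any $\delta$ works). Choosing $\delta<\eta/2$ and using that each $a\mapsto d(\,\cdot\,,a)$ is $1$-Lipschitz, for $d(x,x_0)<\delta$ one gets $d(x,A)\le r_0+\delta$, while $d(x,b)\ge r_0+\eta-\delta> r_0+\delta$ for every $b\in A\setminus p(x_0)$. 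Thus no such $b$ can realize $d(x,A)$, forcing $p(x)\subseteq p(x_0)$, i.e. $p\bigl(\mathcal{B}(x_0,\delta)\bigr)\subseteq U_{p(x_0)}$. Since $x_0$ is arbitrary, this establishes continuity of $p$ and completes the proof.
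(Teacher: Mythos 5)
Your proof is correct and complete. The paper itself offers no argument here: it simply defers to \cite[Lemma 2]{mondejar2015hyperspaces}, so there is no in-paper proof to diverge from, and what you have written is precisely the kind of detail that citation is standing in for. Both halves of your argument check out: the diameter bound $\mathrm{diam}(p(x))\le 2\,d(x,A)<2\epsilon$ uses the $\epsilon$-approximation hypothesis exactly where it is needed, and your reduction of continuity to the inclusion $p(\mathcal{B}(x_0,\delta))\subseteq U_{p(x_0)}$ is the right characterization for maps into a finite $T_0$-space under the paper's convention that open sets are lower sets, so that $U_{p(x_0)}=\{C: C\subseteq p(x_0)\}$. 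The gap estimate $\eta=\min\{d(x_0,b)-d(x_0,A): b\in A\setminus p(x_0)\}>0$ (with the degenerate case $A=p(x_0)$ handled separately) together with $1$-Lipschitzness of $d(\cdot,a)$ is the standard upper-semicontinuity argument for the nearest-point map onto a finite set, and it closes the proof with no gaps.
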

\begin{proof}
The proof follows easily from \cite[Lemma 2]{mondejar2015hyperspaces}.
\end{proof}
\begin{lem}\label{lem:unionhomotopica} Let $X$ and $Y$ be a topological space and a compact metric space respectively. Suppose $A$ is a finite subset of $Y$. If $f,g:X\rightarrow U_{\epsilon}(Y)$ ($f,g:X\rightarrow \mathcal{U}_\epsilon(A)$) are continuous maps where $\epsilon$ is a positive real value and $f\cup g:X\rightarrow U_{\epsilon}(Y)$ ($f\cup g:X\rightarrow \mathcal{U}_\epsilon(A)$) given by $f\cup g(x)=f(x)\cup g(x)$ is well-defined, then $f\cup g$ is continuous and homotopic to $f$ and $g$.
\end{lem}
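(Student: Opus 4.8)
The plan is to treat both cases simultaneously (target $U_\epsilon(Y)\subset 2^Y$ with the upper semifinite topology, and target $\mathcal{U}_\epsilon(A)$), exploiting that in each a neighbourhood base is given by sets of the form ``all members contained in a fixed set $U$''. For the hyperspace these are the basic sets $B(U)=\{C:C\subset U\}$; for the finite space $\mathcal{U}_\epsilon(A)$ they are the minimal open sets $U_D=\{E:E\subseteq D\}$. Writing $W$ for such a basic open set and $U$ for the corresponding fixing set, the key elementary observation is that $f(x)\cup g(x)\subset U$ if and only if $f(x)\subset U$ and $g(x)\subset U$; hence $(f\cup g)^{-1}(W)=f^{-1}(W)\cap g^{-1}(W)$. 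Since $f$ and $g$ are continuous both preimages are open, so $(f\cup g)^{-1}(W)$ is open, and as the sets $W$ form a base this already proves that $f\cup g$ is continuous. (Well-definedness, i.e.\ $\text{diam}(f(x)\cup g(x))<\epsilon$, is part of the hypothesis.)

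For the homotopy I would show directly that $f\simeq f\cup g$ by the single-jump homotopy $F:X\times[0,1]\to U_\epsilon(Y)$ (resp.\ $\to\mathcal{U}_\epsilon(A)$) defined by $F(x,t)=f(x)$ for $0\le t<1$ and $F(x,1)=f(x)\cup g(x)$. Because $f(x)\subseteq f(x)\cup g(x)$, the containment $f(x)\cup g(x)\subset U$ forces $f(x)\subset U$, so for every basic open $W$ as above one has the inclusion $(f\cup g)^{-1}(W)\subseteq f^{-1}(W)$. Computing the preimage slice by slice gives $F^{-1}(W)=\bigl(f^{-1}(W)\times[0,1)\bigr)\cup\bigl((f\cup g)^{-1}(W)\times\{1\}\bigr)$.

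The only real point to check --- and the step I expect to be the genuine obstacle --- is that this set is open in $X\times[0,1]$ at the time-$1$ level. Points with $t<1$ lie in the interior of $f^{-1}(W)\times[0,1)$ and cause no trouble. For a point $(x_0,1)\in F^{-1}(W)$ we have $x_0\in(f\cup g)^{-1}(W)$, and the naive candidate neighbourhood $f^{-1}(W)\times(1-\delta,1]$ actually fails, since its time-$1$ slice need not map into $W$. The correct choice is $(f\cup g)^{-1}(W)\times(1-\delta,1]$: its time-$1$ slice lands in $(f\cup g)^{-1}(W)$ by construction, while for $t<1$ the inclusion $(f\cup g)^{-1}(W)\subseteq f^{-1}(W)$ guarantees that the slice still maps into $W$. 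This shows $F^{-1}(W)$ is open, hence $F$ is continuous and $f\simeq f\cup g$. The symmetric homotopy (replacing $f$ by $g$ throughout, using $(f\cup g)^{-1}(W)\subseteq g^{-1}(W)$) gives $g\simeq f\cup g$. Notably, nothing in the argument uses that the topology is Alexandroff or that the space is finite --- only the base of ``contained-in-$U$'' sets together with the pointwise containment $f,g\subseteq f\cup g$ --- so the hyperspace and the finite-space cases are dispatched by one and the same proof.
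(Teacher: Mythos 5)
Your proposal is correct and follows essentially the same route as the paper's proof: continuity via the base of ``contained-in-$U$'' open sets (so that $(f\cup g)^{-1}(W)=f^{-1}(W)\cap g^{-1}(W)$), followed by the single-jump homotopy that equals $f$ on $[0,1)$ and jumps to $f\cup g$ at time $1$, with openness at the time-$1$ level secured by the inclusion $(f\cup g)^{-1}(W)\subseteq f^{-1}(W)$. Your write-up is in fact slightly more explicit than the paper's about the choice of neighbourhood around $(x_0,1)$, but the argument is the same.
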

\begin{proof}
For simplicity we denote the map $f\cup g$ by $h$. We prove the continuity of $h$. Let $x\in X$. If $U$ is an open set containing $h(x)$, then it also contains $f(x)$ and $g(x)$. From the continuity of $f$ and $g$, it follows that there exist open sets $V_f$ and $V_g$ containing $x$ such that $f(V_f)\subseteq U$ and $f(V_g)\subseteq U$. Hence, we get the continuity of $h$.

We show that $h$ is homotopic to $f$. We consider $H:X\times [0,1]\rightarrow Z$ given by $H(x,t)=f(x)$ if $t\in[0,1)$ and $H(x,1)=h(x)$, where $Z$ is $U_{\epsilon}(Y)$ or $\mathcal{U}_\epsilon(A)$. It suffices to check the continuity of $H$ at points of the form $(x,1)$ where $x\in X$. Let $(x,1)\in X\times \{1\}$. Since $h$ is continuous, for every open set $U$ containing $h(x)$ there exists an open set $V$ containing $x$ such that $f(V)\cup g(V)\subseteq U$. Concretely, $f(V)\subseteq U$. Hence, $V\times I$ is an open set of $X\times [0,1]$ containing $(x,1)$ and satisfying that $H(V\times [0,1])\subseteq U$, which gives the desired result. The proof to show that $g$ is homotopic to $h$ is the same.
\end{proof}

In the following proposition we get a constructive method to get a morphism in pro-$HTop$ induced by the homotopy class of an approximative map.
\begin{prop}\label{prop:approxFASmap} Let $(X,d)$ and $(Y,l)$ be compact metric spaces. If $[\overline{f}]:X\rightarrow Y$ is the homotopy class of an approximative map, then there exists a natural morphism $T([\overline{f}]):T(X)\rightarrow T(Y)$ in pro-$HTop$.
\end{prop}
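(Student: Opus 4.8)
The plan is to take a homotopy class $[\overline{f}]$ of an approximative map, represented by a sequence $\overline{f}=\{f_k:X\rightarrow 2^Y\}_{k\in\mathbb{N}}$, and extract from it a level-respecting family of continuous maps between the finite spaces $\mathcal{U}_{4\delta_n}(A_n)$ and $\mathcal{U}_{4\epsilon_m}(B_m)$ together with an increasing index function, thereby producing a morphism in pro-$HTop$. The bridge between the continuous world of approximative maps and the finite world of approximations is Lemma \ref{lem:definiciondep:x}, which gives a canonical continuous map $p_X:X\rightarrow\mathcal{U}_{4\delta_n}(A_n)$, and dually we will need to compose with the maps $f_k$ landing in $2^Y$ and then project into some $\mathcal{U}_{4\epsilon_m}(B_m)$. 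So first I would set up, for each target index $m$, the projection-type map $P_m:2^Y\rightarrow\mathcal{U}_{4\epsilon_m}(B_m)$ analogous to the $p$ of Lemma \ref{lem:definiciondep:x} (extended from points to closed sets, as in Lemma \ref{lem:elevacionHiperespacio}), so that the composite $P_m\circ f_k:X\rightarrow\mathcal{U}_{4\epsilon_m}(B_m)$ makes sense.

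Next I would use the defining property of an approximative map (Definition \ref{def:approximativeMap}): for each neighborhood $U_\epsilon(Y)$ of the canonical copy of $Y$ there is a threshold $k_0$ past which consecutive $f_k$ are homotopic inside $U_\epsilon(Y)$. Choosing for each target index $m$ a suitably small $\epsilon$ (matched to $\epsilon_m$) yields a threshold, and I would package these thresholds into a monotone index function $\phi:\mathbb{N}\rightarrow\mathbb{N}$ with $m\mapsto k_{\phi(m)}$. The key point is that the finite space $\mathcal{U}_{4\epsilon_m}(B_m)$ only ``sees'' $Y$ up to scale $\epsilon_m$, so maps into $2^Y$ that are homotopic within the neighborhood $U_\epsilon(Y)$ for $\epsilon$ small relative to $\epsilon_m$ will project to homotopic maps into $\mathcal{U}_{4\epsilon_m}(B_m)$. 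The candidate morphism is then $T([\overline{f}])=(g_m,\phi)$ with $g_m:\mathcal{U}_{4\delta_{\phi(m)}}(A_{\phi(m)})\rightarrow\mathcal{U}_{4\epsilon_m}(B_m)$ constructed so that $g_m\circ p_{A_{\phi(m)}}\simeq P_m\circ f_{\phi(m)}$, where $p_{A_{\phi(m)}}:X\rightarrow\mathcal{U}_{4\delta_{\phi(m)}}(A_{\phi(m)})$ is the Lemma \ref{lem:definiciondep:x} map. Here Lemma \ref{lem:unionhomotopica} is the workhorse for replacing an awkward composite by an honest map between the finite spaces while staying in the same homotopy class: when two set-valued maps agree up to taking unions, they are homotopic, and this lets me absorb the discrepancy between $P_m\circ f_{\phi(m)}$ and a genuine order-preserving map on the finite source.

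I would then verify the two conditions for $(g_m,\phi)$ to be a morphism in pro-$HTop$: that $\phi$ is increasing (which I arrange by always enlarging the threshold), and that the relevant squares commute up to homotopy, i.e. for $m\le m'$ the bonding maps $q$ of $T(Y)$ intertwine $g_{m'}$ and $g_m$ after precomposition with the bonding maps of $T(X)$. This commutativity-up-to-homotopy reduces, via the continuous maps $p_A$ and $P_m$, to the homotopies between consecutive $f_k$ guaranteed by the approximative-map condition, again cleaned up using Lemma \ref{lem:unionhomotopica}. The word \emph{natural} in the statement I would interpret as independence (up to the pro-$HTop$ equivalence of morphisms) from the chosen representative $\overline{f}$ of $[\overline{f}]$ and from the auxiliary index choices; I would check this by observing that homotopic representatives (Definition \ref{def:approximativeMapHomotopia}) yield level homotopies after projection, hence equal morphisms in pro-$HTop$.

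The main obstacle will be the bookkeeping in the index/threshold selection: one must choose, simultaneously and coherently, the scale $\epsilon$ in the neighborhood $U_\epsilon(Y)$, the diameter bounds controlling how $P_m$ behaves, and the threshold $k_{\phi(m)}$, so that monotonicity of $\phi$ is preserved \emph{and} the projected homotopies survive at every level. This is exactly the kind of interleaving of sequences performed in the definition of composition of approximative maps recalled before Theorem \ref{thm:approximativeCategory} (following \cite{sanjurjo1992intrinsic} and \cite{moron2008homotopical}), so I expect to lean on those diameter-control estimates together with Theorem \ref{thm:extensionHomotopiaHyperspaces} to guarantee that a homotopy within $U_\epsilon(Y)$ at the level of $2^Y$ descends to a homotopy within the finite space once we fix $\gamma$ small enough. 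Getting this quantitative matching right—so that the constructed $g_m$ are genuinely continuous (order-preserving) and the diagram commutes up to homotopy at every level—is the technical heart of the argument.
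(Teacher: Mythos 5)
Your proposal follows essentially the same route as the paper's proof: the projection $P_m$ you describe is exactly the paper's $r_n=p_n^*|_{U_{2\epsilon_n}(Y)}$ (built from Lemmas \ref{lem:definiciondep:x} and \ref{lem:elevacionHiperespacio}), the level maps are the composites $r_n\circ f_{s(n)}^*\circ i$ with the index function chosen via Proposition \ref{prop:baseEntornosAbiertos}, Definition \ref{def:approximativeMap} and the $\gamma$ from Theorem \ref{thm:extensionHomotopiaHyperspaces}, and all homotopy-commutativity checks are discharged by Lemma \ref{lem:unionhomotopica}, just as in the paper. The only detail to tighten is that $P_m$ must be restricted to a neighborhood $U_{2\epsilon_m}(Y)$ of the canonical copy of $Y$ (not all of $2^Y$) for the diameter bound, and hence well-definedness, to hold.
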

\begin{proof}
Set $T(X)=(\mathcal{U}_{4\delta_n}(A_n),q_{n,n+1})$ and $T(Y)=(\mathcal{U}_{4\epsilon_n}(B_n),q_{n,n+1})$. Firstly, we prove that for every $n\in \mathbb{N}$ the map $r_n:U_{2\epsilon_n}(Y)\rightarrow \mathcal{U}_{4\epsilon_n}(B_n)$ given by 
$$r_n(C)=\bigcup_{x\in C}\{b\in B_n|l(x,b)=l(x,B_n) \}$$
is well-defined and continuous. If $x,y\in r_n(C)$ for some $C\in \mathcal{U}_{2\epsilon_n}(Y)$, then there exist $c_x,c_y\in C$ such that $x\in r_n(c_x)$ and $y\in r_{n}(c_y)$. We have $l(x,c_x),l(y,c_y)<\epsilon_n$ and $l(c_x,c_y)<2\epsilon_n$. Hence, we get 
$$l(x,y)<l(x,c_x)+l(c_x,c_y)+l(c_y,y)<\epsilon_n+2\epsilon_n+\epsilon_n, $$
which implies that $r_n$ is well-defined. We get that $r_n$ is continuous because $r_n=p_{n_{|\mathcal{U}_{2\epsilon_n}(Y)}}^*$, where $p_n$ is the continuous map considered in Lemma \ref{lem:definiciondep:x} and $p_n^*$ denotes the extension of $p_n$ to the hyperspace of $Y$ defined in Lemma \ref{lem:elevacionHiperespacio}. 

We now prove the commutativity up to homotopy of the following diagram, where $i$ denotes the inclusion map.
\[
  \begin{tikzcd}[row sep=large,column sep=huge]
  U_{2\epsilon_n}(Y) \arrow{r}{r_n} &  \mathcal{U}_{4\epsilon_n}(B_n)  \\  U_{2\epsilon_{n+1}}(Y) \arrow{u}{i} \arrow{r}{r_{n+1}}& \mathcal{U}_{4\epsilon_{n+1}}(B_{n+1}) \arrow{u}{q_{n,n+1}}
  \end{tikzcd}
\]
Consider $C\in \mathcal{U}_{2\epsilon_{n+1}}(Y)$. If $x\in q_{n,n+1}(r_{n+1}(C))$, then there exist $a_x\in B_{n+1}$ and $b_x\in C$ such that $x\in q_{n,n+1}(a_x)$ and $a_x\in r_{n+1}(b_x)$. We obtain
$$l(x,b_x)<l(x,a_x)+l(a_x,b_x)<\epsilon_n+\frac{\epsilon_n}{2}.$$
If $y\in r_n(i(C))$, then there exists $b_y\in C$ such that $y\in r_n(b_y)$. Since $l(b_x,b_y)<2\epsilon_{n+1}<\epsilon_n$, we have
$$l(x,y)<l(x,b_x)+l(b_x,b_y)+l(b_y,y)<\epsilon_n+\frac{\epsilon_n}{2}+\epsilon_n+\epsilon_n.$$

Thus, we have $h=r_n\circ i \cup q_{n,n+1}\circ r_{n+1}:U_{2\epsilon_{n+1}}(Y)\rightarrow \mathcal{U}_{4\epsilon_n(B_n)}$ is well-defined, and so the continuity of $h$ and the commutativity up to homotopy of the previous diagram follow from Lemma \ref{lem:unionhomotopica}.

Let us consider $f=\{f_k:X\rightarrow 2^Y\}_{k\in \mathbb{N}}\in [\overline{f}]$. By Proposition \ref{prop:baseEntornosAbiertos}, for every open neighborhood $U$ of $Y$ in $2^Y$ there exists $n$ such that $U_{2\epsilon_n}(Y)\subseteq U$. By Definition \ref{def:approximativeMap}, there exists $s(n)$ such that $f_m$ is homotopic to $f_{m+1}$ in $U_{2\epsilon_n}(Y)$ for every $m\geq s(n)$. Let $H$ denote the homotopy between $f_{s(n)}$ and $f_{s(n)+1}$. By Theorem \ref{thm:extensionHomotopiaHyperspaces}, there exists $\gamma_n>0$ such that $\overline{H}(U_{\gamma_n}(X)\times I)\subseteq U_{2\epsilon_n}(Y)$. We also denote by $f$ the map $f:\mathbb{N}\rightarrow \mathbb{N}$ given by $f(n)=\min\{ l\in \mathbb{N}|4\delta_l<\gamma_n\}$, it is clear that $f$ is well-defined and satisfies that $f(n)\leq f(m)$ for every $n\leq m$ in $\mathbb{N}$. For every natural number $n$ we consider $f_{n}:\mathcal{U}_{4\delta_{f(n)}}(A_{f(n)})\rightarrow \mathcal{U}_{4\epsilon_n}(B_n)$ given by $f_n=r_n\circ f_{s(n)}^*\circ i$, where $f_{s(n)}^*$ denotes the extension of $f_{s(n)}$ to the hyperspace of $X$ given in Lemma \ref{lem:elevacionHiperespacio} and $i:\mathcal{U}_{4\delta_{f(n)}}(B_n)\rightarrow U_{\gamma_n}(X)$ denotes the inclusion map. By construction, it is immediate to get that $f_n$ is well-defined and continuous for every $n\in \mathbb{N}$. To check that $(f_n,f):T(x)\rightarrow T(Y)$ is a morphism in pro-$HTop$ we need to verify the commutativity up to homotopy of the following diagram.
\[
  \begin{tikzcd}[row sep=large,column sep=huge]
\mathcal{U}_{4\delta_{f(n)}}(A_{f(n)}) \arrow{r}{i} & U_{\gamma_{n}}(X) \arrow{r}{f^*_{s(n)}} & U_{2\epsilon_n}(Y) \arrow{r}{r_n} & \mathcal{U}_{4\epsilon_n}(B_n) \\
\mathcal{U}_{4\delta_{f(n+1)}}(A_{f(n+1)}) \arrow{r}{i} \arrow{u}{q}& U_{\gamma_{n+1}}(X) \arrow{u}{i} \arrow{r}{f^*_{s(n+1)}} & U_{2\epsilon_{n+1}}(Y)\arrow{r}{r_{n+1}} \arrow{u}{i} & \mathcal{U}_{4\epsilon_{n+1}}(B_{n+1})\arrow{u}{q}
  \end{tikzcd}
\]
We check the commutativity up to homotopy of the first square. Consider $C\in \mathcal{U}_{4\delta_{f(n+1)}}(A_{f(n+1)})$. If $x\in i(q(C))=q(C)$, then there exists $a_x\in C$ such that $x\in q(a_x)$, which implies $d(x,a_x)<2\delta_{f(n)}<\frac{\gamma_n}{2}$. If $y\in i(i(C))=C$, then $d(y,a_x)<4\delta_{f(n+1)}<\frac{\gamma_n}{2}$. We get $d(x,y)<\gamma_n$. Therefore, $q(C)\subseteq q(C)\cup C$ for every $C\in \mathcal{U}_{4\delta_{f(n+1)}}(A_{f(n+1)})$, where $q\cup i:\mathcal{U}_{4\delta_{f(n+1)}}(A_{f(n+1)})\rightarrow U_{\gamma_n}(X)$ is well-defined and continuous. By Lemma \ref{lem:unionhomotopica}, the commutativity up to homotopy of the first square can be deduced. The second square is commutative up to homotopy by construction. The commutativity up to homotopy of the third square was proved at beginning. 

If $g\in[\overline{f}]$, then we can repeat the same construction to get $(g_n,g):T(X)\rightarrow T(Y)$. We prove that $(f_n,f)$ is equivalent to $(g_n,g)$ as morphisms in pro-$HTop$. To do this, given a natural number $n$ we need to verify the commutativity up to homotopy of the following diagram for some $m\geq f(n),g(n)$.
\[
  \begin{tikzcd}
& \mathcal{U}_{4\delta_{f(n)}}(A_{f(n)}) \arrow{r}{i} & U_{\gamma_{n}}(X)\arrow{dd}{i} \arrow{r}{f^*_{s(n)}} & U_{2\epsilon_n}(Y)\arrow{dd}{\textnormal{id}} \arrow{dr}{r_n} & \\
\mathcal{U}_{4\delta_m}(A_m)\arrow{ur}{q}\arrow{dr}{q} & & & &  \mathcal{U}_{4\epsilon_n}(B_n) \\
& \mathcal{U}_{4\delta_{g(n)}}(A_{g(n)}) \arrow{r}{i} & U_{\tau_{n}}(X) \arrow{r}{g^*_{h(n)}} & U_{2\epsilon_n}(Y)  \arrow{ur}{r_n}
  \end{tikzcd}
\]
We define $m=\max\{f(n),g(n) \}$. Without loss of generality we can assume that $\gamma_n\leq \tau_n$ and $m=f(n)$. We study the commutativity up to homotopy of the first square. Suppose $f(n)\neq g(n)$ because the other case follows easily. Consider $C\in \mathcal{U}_{4\delta_m}(A_m)$. If $y\in i(q(C))=q(C)$, then there exists $a_y\in C$ such that $y\in q(a_y)$. We get $d(a_y,y)<2\delta_{g(n)}<\frac{\tau_n}{2}$ and $d(x,a_y)<4\delta_{f(n)}<2\delta_{g(n)}<\frac{\tau_n}{2}$ for every $x\in C$. Thus, we have 
$$d(x,y)<d(a_x,a_y)+d(a_y,y)<\tau_n, $$
and consequently $i\circ q \cup i\circ i :\mathcal{U}_{4\delta_{f(n)}}(A_{f(n)})\rightarrow U_{\tau_n}(X)$ is well-defined. By Lemma \ref{lem:unionhomotopica}, we get the desired result. The commutativity up to homotopy of the second square follows from the fact that $f$ and $g$ are homotopic approximative maps (see Theorem \ref{thm:extensionHomotopiaHyperspaces}) and the choice of $s(n)$ and $h(n)$. For every $m\geq s(n),h(n)$ we have that $f_{s(n)}$ is homotopic to $f_m$ in $U_{2\epsilon_n}(Y)$ and $g_{h(n)}$ is homotopic to $g_m$ in $U_{2\epsilon_n}(Y)$. The third square commutes trivially. 
\end{proof}

Given a morphism in pro-$HTop$ we construct a homotopy class of an approximative map.
\begin{prop}\label{prop:FAStoApprox} Let $(X,d)$ and $(Y,l)$ be compact metric spaces. If $(f_n,f):T(X)\rightarrow T(Y)$ is a morphism in pro-$HTop$, then there exists a natural homotopy class of an approximative map $E(f_n):X\rightarrow Y$ induced by $(f_n,f)$.
\end{prop}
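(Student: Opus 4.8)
The plan is to reverse the construction of Proposition \ref{prop:approxFASmap}: given the pro-$HTop$ morphism $(f_n,f)$, I will build, for each $k\in\mathbb{N}$, a continuous map $E(f_n)_k:X\rightarrow 2^Y$ by composing the natural embedding of $X$ into its finite approximation with the finite-space map $f_n$ and then re-embedding the target finite space back into the hyperspace $2^Y$. Concretely, using Lemma \ref{lem:definiciondep:x} I have for each index $m$ a continuous map $p_m:X\rightarrow \mathcal{U}_{4\delta_m}(A_m)$ sending $x$ to its set of nearest approximation points, and on the target side the finite poset $\mathcal{U}_{4\epsilon_n}(B_n)\subseteq 2^Y$ sits naturally inside $2^Y$ via $C\mapsto C$ (each element of the finite poset is itself a finite, hence closed, subset of $Y$). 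The candidate $k$-th term is then $E(f_n)_k = j_k\circ f_k\circ p_{f(k)}:X\rightarrow 2^Y$, where $j_k$ denotes the inclusion $\mathcal{U}_{4\epsilon_k}(B_k)\hookrightarrow 2^Y$. Each such map is continuous by Lemma \ref{lem:definiciondep:x} and continuity of $f_k$ and of the inclusion.

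Next I would verify that $\overline{E(f_n)}=\{E(f_n)_k\}_{k\in\mathbb{N}}$ is an approximative map in the sense of Definition \ref{def:approximativeMap}. By Proposition \ref{prop:baseEntornosAbiertos} it suffices to show that, given $\epsilon>0$, the maps $E(f_n)_k$ and $E(f_n)_{k+1}$ become homotopic inside $U_\epsilon(Y)$ for all large $k$. This is exactly where the commutativity up to homotopy of the bonding squares of $(f_n,f)$ enters: the pro-$HTop$ morphism condition gives, for each $k$, that $q_{k,k+1}\circ f_{k+1}$ is homotopic (in the finite space $\mathcal{U}_{4\epsilon_k}(B_k)$) to $f_k\circ q_{f(k),f(k+1)}$. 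Pushing such a finite-space homotopy into $2^Y$ through the inclusion $j_k$ keeps the image inside $\mathcal{U}_{4\epsilon_k}(B_k)$, which by the well-choice of $\epsilon_k\to 0$ lands inside $U_\epsilon(Y)$ once $4\epsilon_k<\epsilon$. The only remaining issue is matching the $X$-side of the square, where the two natural embeddings $p_{f(k)}$ and $p_{f(k+1)}$ composed with the bonding map $q_{f(k),f(k+1)}$ need to be reconciled; here I would invoke Lemma \ref{lem:unionhomotopica}, exactly as in the proof of Proposition \ref{prop:approxFASmap}, to connect $p_{f(k)}$ with $q_{f(k),f(k+1)}\circ p_{f(k+1)}$ up to homotopy, since their union is well-defined for the relevant radii.

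Finally I would check that the assignment is well-defined on equivalence classes, i.e. that homotopic (equivalent) pro-$HTop$ morphisms $(f_n,f)\sim(g_n,g)$ yield homotopic approximative maps in the sense of Definition \ref{def:approximativeMapHomotopia}. This reduces to the same bookkeeping: the equivalence of morphisms in pro-$HTop$ provides a common refining index $m$ and a homotopy in each finite space $\mathcal{U}_{4\epsilon_n}(B_n)$ between the two composites, which transports into $2^Y$ via $j_n$ and stays inside $U_\epsilon(Y)$ for large $n$, again with the $X$-side handled by Lemma \ref{lem:unionhomotopica}. I expect the main obstacle to be precisely the careful selection of the index function and radii so that every embedding, bonding map, and homotopy is simultaneously well-defined and lands in the correct $\epsilon$-neighborhood; the estimates are of the same flavor as those already carried out in Proposition \ref{prop:approxFASmap}, so the heart of the difficulty is organizing the triangle inequalities and the index shifts coherently rather than any new conceptual ingredient.
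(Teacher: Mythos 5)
Your proposal matches the paper's own proof essentially step for step: the candidate approximative map is the same composite $f_k\circ p_{f(k)}$ (the paper leaves the inclusion into $2^Y$ implicit), the $X$-side triangle is reconciled exactly as you say via Lemma \ref{lem:definiciondep:x} and the union-map trick of Lemma \ref{lem:unionhomotopica}, the $Y$-side square is handled by the pro-$HTop$ condition, and well-definedness on equivalence classes is checked through a common refining index. The approach and the supporting lemmas are the same; only the explicit triangle-inequality estimates remain to be written out, as you anticipate.
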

\begin{proof}
Set $T(X)=(\mathcal{U}_{4\delta_n}(A_n),q_{n,n+1})$ and $T(Y)=(\mathcal{U}_{4\epsilon_n}(B_n),q_{n,n+1})$. Firstly, we consider $p_n:X\rightarrow \mathcal{U}_{4\delta_n}(A_n)$ given by $p_n(x)=\{a\in A_n| d(x,a)=d(x,A_n)\}$ for every $n\in \mathbb{N}$, that is, the continuous map considered in Lemma \ref{lem:definiciondep:x}. We prove that the following diagram commutes up to homotopy.
\[
  \begin{tikzcd}[row sep=large,column sep=huge]
& X \arrow[dl,"p_n"'] \arrow{dr}{p_{n+1}} & \\
\mathcal{U}_{4\delta_n}(A_n) & & \mathcal{U}_{4\delta_{n+1}(A_{n+1})} \arrow{ll}{q_{n,n+1}}
  \end{tikzcd}
\]
If $a_x\in q(p(x))$, then there exists $b_x\in A_{n+1}$ with $a_x\in q(b_x)$ and $b_x\in p(x)$. Hence, $d(x,b_x)<\delta_{n+1}<\frac{\delta_n}{2}$ and $d(a_x,b_x)<2\delta_n$. If $c_x\in p(x)$, then we get $d(x,c_x)<\delta_n$. Thus,
$$d(c_x,a_x)<d(c_x,x)+d(x,b_x)+d(b_x,a_x)<3\delta_n ,$$
which implies that $q_{n,n+1}\circ p_{n+1}\cup p_n:X\rightarrow \mathcal{U}_{4\delta_n}(A_n)$ is well-defined. Applying Lemma \ref{lem:unionhomotopica} we get that the previous diagram commutes up to homotopy.

We construct a candidate to be an approximative map. We consider $F=\{F_k:X\rightarrow 2^Y\}_{k\in \mathbb{N}}$ given by $F_k=f_k\circ p_{f(k)}:X\rightarrow \mathcal{U}_{4\epsilon_k}(B_k)$. For every open neighborhood $U$ of the canonical copy of $Y$ in $2^Y$, there exists $4\epsilon_m$ such that $\mathcal{U}_{4\epsilon_m}(B_m)\subset U_{4\epsilon_m}(Y)\subseteq U$ by Proposition  \ref{prop:baseEntornosAbiertos}. To prove that $F$ is an approximative map we check that $F_{m}$ is homotopic to $F_{m+1}$ in $U$, which is equivalent to show the commutativity up to homotopy of the following diagram.
\[
  \begin{tikzcd}[row sep=large,column sep=huge]
X \arrow{r}{p_{f(m)}}& \mathcal{U}_{4\delta_{f(m)}}(A_{f(m)}) \arrow{r}{f_{m}} & \mathcal{U}_{4\epsilon_m} (B_m) \\ 
X \arrow{u}{\textnormal{id}} \arrow{r}{p_{f(m+1)}} & \mathcal{U}_{4\delta_{f(m+1)}}(A_{f(m+1)}) \arrow{u}{q} \arrow{r}{f_{m+1}} & \mathcal{U}_{4\epsilon_{m+1}}(B_{m+1}) \arrow{u}{q}
  \end{tikzcd}
\]
The commutativity up to homotopy of the first square was proved at the beginning.
The second square commutes up to homotopy since $(f_n,f)$ is a morphism in pro-$HTop$. Thus $F$ is an approximative map. We denote by $E(f_n)$ the homotopy class generated by the approximative map $F$. We verify that $E$ is well-defined, that is, if $(g_n,g)$ is equivalent to $(f_n,f)$ as morphisms in pro-$HTop$, then the induced approximative map $G=\{G_k=g_k\circ p_{f(k)}:X\rightarrow 2^Y\}_{k\in \mathbb{N}}$ is homotopic to $F=\{F_k=f_k \circ p_{f(k)}:X\rightarrow 2^Y\}_{k\in \mathbb{N}}$. 

For every open neighborhood $U$ of $Y$ in $2^Y$ there exists $4\epsilon_n$ such that $U_{4\epsilon_n}(Y)\subseteq U$ by Proposition \ref{prop:baseEntornosAbiertos}. By hypothesis, for every $n$ there exists $m\geq f(n),g(n)$ such that $f_{n}\circ q_{f(n),m}$ is homotopic to $g_{n} \circ q_{g(n),m}$. We have the following diagram.

\[
  \begin{tikzcd}[row sep=large,column sep=huge]
  X      \arrow[d,"p_{f(n)}"]           &              X \arrow[l,"\text{id}"'] \arrow{r}{\text{id}}   \arrow[d,"p_{m}"]            &                      X \arrow[d,"p_{g(n)}"]   \\
  \mathcal{U}_{4\delta_{f(n)}}(A_{f(n)}) \arrow[dr, "f_n"'] &  \mathcal{U}_{4\delta_{m}}(A_{m}) \arrow[l, "q_{f(n),m}"'] \arrow[r, "q_{g(n),m}"]  & \mathcal{U}_{4\delta_{g(n)}}(A_{g(n)}) \arrow[dl, "g_n"]\\
  & \mathcal{U}_{4\epsilon_n}(B_n) &
    \end{tikzcd} 
\]

It is obvious that every square commutes up to homotopy. From here, we obtain that $F$ and $G$ are homotopic.
\end{proof}

\begin{lem}\label{lem:ETidentidad} Let $(X,d)$ and $(Y,l)$ be compact metric spaces. If $[\overline{f}]:X\rightarrow Y$ is the homotopy class of an approximative map, then $E(T([\overline{f}]))=[\overline{f}]$.
\end{lem}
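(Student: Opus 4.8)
The plan is to unwind the two constructions and exhibit $E(T([\overline{f}]))$ as the class of an explicit approximative map, then identify that class with $[\overline{f}]$ through a short chain of homotopies built from Lemma \ref{lem:unionhomotopica}. Fix a representative $\overline{f}=\{f_k:X\rightarrow 2^Y\}_k$. By Proposition \ref{prop:approxFASmap}, $T([\overline{f}])=(\phi_n,f)$ with $\phi_n=r_n\circ f_{s(n)}^*\circ i:\mathcal{U}_{4\delta_{f(n)}}(A_{f(n)})\rightarrow \mathcal{U}_{4\epsilon_n}(B_n)$ and index function $f(n)=\min\{l:4\delta_l<\gamma_n\}$. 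Feeding this morphism into Proposition \ref{prop:FAStoApprox}, the resulting approximative map $F=\{F_k\}$ has $F_k=\phi_k\circ p_{f(k)}=r_k\circ f_{s(k)}^*\circ i\circ p_{f(k)}$, where $p_{f(k)}:X\rightarrow \mathcal{U}_{4\delta_{f(k)}}(A_{f(k)})$ is the nearest-point map of Lemma \ref{lem:definiciondep:x}. It therefore suffices to check, in the sense of Definition \ref{def:approximativeMapHomotopia}, that for each open neighborhood $U$ of $Y$ in $2^Y$ one has $F_n$ homotopic to $f_n$ in $U$ for all large $n$.

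The core of the argument is to show, for each fixed $n$, that $F_n\simeq f_{s(n)}$ inside $U_{4\epsilon_n}(Y)$. First I would replace the nearest-point detour $i\circ p_{f(n)}:X\rightarrow U_{\gamma_n}(X)$ by the canonical inclusion $j(x)=\{x\}$. Since every point of $p_{f(n)}(x)$ lies within $\delta_{f(n)}$ of $x$ and $4\delta_{f(n)}<\gamma_n$, the union $p_{f(n)}(x)\cup\{x\}$ has diameter $<\gamma_n$, so $i\circ p_{f(n)}\cup j$ lands in $U_{\gamma_n}(X)$ and Lemma \ref{lem:unionhomotopica} gives $i\circ p_{f(n)}\simeq j$ in $U_{\gamma_n}(X)$. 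Applying $f_{s(n)}^*$, and using that the choice of $\gamma_n$ (via property (4) of Theorem \ref{thm:extensionHomotopiaHyperspaces}, together with $\overline{H}(\cdot,0)=f_{s(n)}^*$) forces $f_{s(n)}^*(U_{\gamma_n}(X))\subseteq U_{2\epsilon_n}(Y)$, this homotopy pushes forward to $f_{s(n)}^*\circ i\circ p_{f(n)}\simeq f_{s(n)}^*\circ j=f_{s(n)}$ in $U_{2\epsilon_n}(Y)$. Post-composing with $r_n$ yields $F_n\simeq r_n\circ f_{s(n)}$ in $\mathcal{U}_{4\epsilon_n}(B_n)$. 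Finally, a second application of the union trick removes $r_n$: since $r_n(C)\subseteq B_n$ consists of points within $\epsilon_n$ of $C$ and $\mathrm{diam}(C)<2\epsilon_n$, the set $r_n(C)\cup C$ has diameter $<4\epsilon_n$, so $r_n\cup\mathrm{id}$ is a well-defined map into $U_{4\epsilon_n}(Y)$ and Lemma \ref{lem:unionhomotopica} gives $r_n\circ f_{s(n)}\simeq f_{s(n)}$ in $U_{4\epsilon_n}(Y)$. Chaining the two homotopies produces $F_n\simeq f_{s(n)}$ in $U_{4\epsilon_n}(Y)$.

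To conclude, fix an open neighborhood $U$ of $Y$ in $2^Y$ and use Proposition \ref{prop:baseEntornosAbiertos} to pick $N$ with $U_{4\epsilon_N}(Y)\subseteq U$; for $n\geq N$ the previous step gives $F_n\simeq f_{s(n)}$ in $U_{4\epsilon_n}(Y)\subseteq U$. Since $\overline{f}$ is an approximative map, there is $k_0$ with $f_j\simeq f_l$ in $U$ whenever $j,l\geq k_0$; as $s$ can be taken non-decreasing with $s(n)\to\infty$, for $n$ beyond some $n_0\geq N$ both $n$ and $s(n)$ exceed $k_0$, whence $f_{s(n)}\simeq f_n$ in $U$. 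Combining, $F_n\simeq f_n$ in $U$ for all $n\geq n_0$, which is exactly $[F]=[\overline{f}]$ and hence $E(T([\overline{f}]))=[\overline{f}]$. I expect the main obstacle to be the neighborhood bookkeeping: ensuring that each homotopy produced by the union trick genuinely lands in the hyperspace required by the next composition (the estimates $4\delta_{f(n)}<\gamma_n$, the $\gamma_n$-control from Theorem \ref{thm:extensionHomotopiaHyperspaces}, and the $4\epsilon_n$ diameter bound for $r_n$), together with reconciling the three distinct roles played by the indices $s(n)$, $f(n)$ and the target neighborhood $U$.
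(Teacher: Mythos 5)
Your proposal is correct and follows essentially the same route as the paper: unwind $F_n=r_n\circ f_{s(n)}^*\circ i\circ p_{f(n)}$, use Lemma \ref{lem:unionhomotopica} with the estimates $4\delta_{f(n)}<\gamma_n$ and $\mathrm{diam}(C\cup r_n(C))<4\epsilon_n$ to homotope away the nearest-point map and $r_n$, and finish with the approximative-map property to compare $f_{s(n)}$ with $f_n$. The only (cosmetic) difference is that you apply the union trick to $i\circ p_{f(n)}$ in $U_{\gamma_n}(X)$ and push forward by $f_{s(n)}^*$, whereas the paper applies it directly to $\overline{f}_{s(n)}\cup \overline{f}_{s(n)}^*\circ i\circ p_{f(n)}$ in $U_{2\epsilon_n}(Y)$.
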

\begin{proof}
Let $\overline{f}=\{\overline{f}_k: X\rightarrow 2^Y \}_{k\in \mathbb{N}}$ denote the approximative map that generates $[\overline{f}]$. We consider a representative $(f_n,f)$ of $T([\overline{f}])$ induced by $\overline{f}$ and a representative $\overline{f}'=\{\overline{f}_k':X\rightarrow 2^Y\}_{k\in \mathbb{N}}$ of $E(T(f_n))$ induced by $(f_n,f)$. Therefore, $\overline{f}_k'=f_k\circ p_{f(k)}$, where  $f_k=r_{k}\circ \overline{f}^*_{s(k)}\circ i$, see the proof of Proposition \ref{prop:approxFASmap}.
For every open neigbhorhood $U$ of $Y$ in $2^Y$ there exists $U_{4\epsilon_n}(Y)\subset U$ by Proposition \ref{prop:baseEntornosAbiertos}. We check that the following diagram is commutative up to homotopy. 
\[
  \begin{tikzcd}[row sep=large,column sep=huge]
  X \arrow{d}{\overline{f}_{s(n)}} \arrow{r}{p_{f(n)}} & \mathcal{U}_{4\delta_{f(n)}}(A_{f(n)}) \arrow{r}{i}   & \mathcal{U}_{\gamma_n}(X) \arrow{dl}{\overline{f}_{s(n)}^*} \\ \mathcal{U}_{2\epsilon_n}(Y) \arrow{d}{i} &  \mathcal{U}_{2\epsilon_n}(Y) \arrow{dl}{r_n} \arrow{l}{\text{id}} \\
  \mathcal{U}_{4\epsilon_n}(Y)
    \end{tikzcd} 
\]
Let $x\in X$. Then we know that $\text{diam}(\overline{f}_{s(n)}(x))<2\epsilon_n$. We have $d(x,a)<\delta_{f(n)}<\gamma_n$ for every $a\in p(x)$. Thus, $2\epsilon_n> \text{diam}(\overline{f}^*_{s(n)}(i(p(x))\cup \{x\}))=\text{diam}(\overline{f}^*_{s(n)}(i(p(x)))\cup \overline{f}_{s(n)}(x))$ so $h=\overline{f}_{s(n)}\cup \overline{f}_{s(n)}^* \circ i \circ p_{f(n)}:X\rightarrow U_{\epsilon_n}(Y)$ is continuous and well-defined.  By Lemma \ref{lem:unionhomotopica}, we get the commutativity up to homotopy of the first square. It is clear that $\text{diam}(i(C)\cup r_n(C))<4\epsilon_n$ for every $C\in U_{2\epsilon_n}(Y)$. Therefore, we can repeat the previous argument to show that $r_n$ is homotopic to $i$. 

By construction and hypothesis, for every $m\geq s(n),n$ we have that $\overline{f}_m$ is homotopic to $\overline{f}_{s(n)}$ in $U$ and $\overline{f}_{n}'$ is homotopic to $\overline{f}_m'$ in $U$, which gives the desired result.
\end{proof}

\begin{lem}\label{lem:TEidentidad} Let $(X,d)$ and $(Y,l)$ be compact metric spaces. If $(f_n,f):T(X)\rightarrow T(Y)$ is a morphism in pro-$HTop$, then $T(E(f_n,f))=(f_n,f)$.
\end{lem}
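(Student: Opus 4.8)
The plan is to show that $E$ and $T$ are mutually inverse by establishing that $T\circ E$ equals the identity, dualizing the diagram-chasing argument already used for Lemma~\ref{lem:ETidentidad}. First I would unwind both constructions. Applying Proposition~\ref{prop:FAStoApprox} to $(f_n,f)$ produces the approximative map $F=\{F_k=f_k\circ p_{f(k)}:X\to 2^Y\}_{k\in\mathbb{N}}$, and then applying the recipe in the proof of Proposition~\ref{prop:approxFASmap} to $[F]$ yields a representative $(\tilde f_n,\tilde f)$ of $T(E(f_n,f))$ with $\tilde f_n=r_n\circ F_{s(n)}^*\circ i$, where $s(n)$ and $\gamma_n$ (hence $\tilde f(n)=\min\{l:4\delta_l<\gamma_n\}$) are the indices extracted from $F$ via Theorem~\ref{thm:extensionHomotopiaHyperspaces}. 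Written out pointwise, this reads $\tilde f_n(D)=r_n\bigl(\bigcup_{c\in D}f_{s(n)}(p_{f(s(n))}(c))\bigr)$ for $D\in\mathcal{U}_{4\delta_{\tilde f(n)}}(A_{\tilde f(n)})$.

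Next I would recall the equivalence criterion in pro-$HTop$: the morphisms $(\tilde f_n,\tilde f)$ and $(f_n,f)$ coincide provided that, for every $n$, there is an index $m\geq f(n),\tilde f(n)$ for which $f_n\circ q_{f(n),m}$ is homotopic to $\tilde f_n\circ q_{\tilde f(n),m}$ as maps $\mathcal{U}_{4\delta_m}(A_m)\to\mathcal{U}_{4\epsilon_n}(B_n)$. I would fix $n$, choose such an $m$ large enough that all subsequent diameter estimates hold, and produce this homotopy through Lemma~\ref{lem:unionhomotopica}: if the pointwise union $f_n\circ q_{f(n),m}\cup\tilde f_n\circ q_{\tilde f(n),m}$ is well-defined, i.e. lands in $\mathcal{U}_{4\epsilon_n}(B_n)$, then both composites are homotopic to their union and therefore to each other.

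The heart of the argument is the metric estimate guaranteeing well-definedness. For $D$ at level $m$ I would compare a point $b\in f_n(q_{f(n),m}(D))$ with a point $b'\in\tilde f_n(q_{\tilde f(n),m}(D))$. The element $b'$ is a nearest point of $B_n$ to some $f_{s(n)}(p_{f(s(n))}(c))$, and here I would exploit two facts: the nearest-point map $p_{f(s(n))}$ of Lemma~\ref{lem:definiciondep:x} and the retraction $r_n$ displace points by amounts controlled by $\delta_{f(s(n))}$ and $\epsilon_n$; and, crucially, the assumption that $(f_n,f)$ is already a morphism in pro-$HTop$ gives the commutativity up to homotopy, and up to controlled distance, relating $f_n$ at level $n$ to $f_{s(n)}$ at level $s(n)$ through the bonding maps $q$. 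Combining these along a telescoping chain of triangle inequalities should bound $l(b,b')$ below $4\epsilon_n$, yielding the required diameter control.

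I expect the main obstacle to be precisely this estimate together with the index bookkeeping: one must simultaneously juggle the nested reindexings $s(n)$, $f(s(n))$, $\tilde f(n)$ and $f(n)$, choose $m$ dominating all of them, and insert the morphism property of $(f_n,f)$ at the correct scale so that the ``nearest-point followed by retraction'' sandwich $r_n\circ f_{s(n)}^*\circ(\text{extension of }p)$ collapses, up to homotopy, back to $f_n$ composed with a bonding map. Once the union is shown well-defined, Lemma~\ref{lem:unionhomotopica} finishes the homotopy, and hence the equality $T(E(f_n,f))=(f_n,f)$ of morphisms in pro-$HTop$.
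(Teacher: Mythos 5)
Your unwinding of the two constructions is correct and agrees with the paper: a representative of $T(E(f_n,f))$ is $\tilde f_n=r_n\circ F_{s(n)}^*\circ i$ with $F_{s(n)}=f_{s(n)}\circ p_{f(s(n))}$, and the equivalence criterion in pro-$HTop$ you state is the right one. The gap is in what you call the heart of the argument. You want to show that the pointwise union $f_n\circ q_{f(n),m}\cup\tilde f_n\circ q_{\tilde f(n),m}$ takes values in $\mathcal{U}_{4\epsilon_n}(B_n)$ by a telescoping metric estimate, and you plan to feed the hypothesis that $(f_n,f)$ is a morphism in pro-$HTop$ into that estimate as a statement of ``controlled distance'' between $f_n$ and $f_{s(n)}$. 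That hypothesis carries no metric information: it only provides a \emph{homotopy} between $f_n\circ q_{f(n),m}$ and the composite of $f_{s(n)}$ with bonding maps. Homotopic maps into a finite poset such as $\mathcal{U}_{4\epsilon_n}(B_n)$ are joined by a fence of pointwise-comparable maps, and their values at a given argument need not be close in the metric of $Y$: if $B_n=\{a,b,c\}$ with $l(a,b),l(b,c)<4\epsilon_n\leq l(a,c)$, the constant maps at $\{a\}$ and at $\{c\}$ are homotopic in $\mathcal{U}_{4\epsilon_n}(B_n)$, yet their union $\{a,c\}$ does not lie in $\mathcal{U}_{4\epsilon_n}(B_n)$. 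Since $f_n$ is an arbitrary order-preserving map, with no metric control beyond this homotopy, no chain of triangle inequalities will bound $l(b,b')$ by $4\epsilon_n$, and the single global application of Lemma~\ref{lem:unionhomotopica} you propose cannot be carried out.

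The paper's proof reserves the union trick for the two places where a genuine \emph{containment} of values holds, and these are the only metric inputs: $p_{f(s(n))}(i(c))\subseteq q_{f'(n),f(s(n))}(c)$ (the nearest points of $A_{f(s(n))}$ to $c$ are among all points of $A_{f(s(n))}$ within the radius defining the bonding map), which gives $p^*_{f(s(n))}\circ i\simeq q_{f'(n),f(s(n))}$; and $r_n(a)\subseteq q_{n,s(n)}(a)$ on the relevant image, which gives that $r_n$ is homotopic there to the bonding map of $T(Y)$. After these two replacements, $\tilde f_n\circ q$ becomes, up to homotopy, the bonding map from level $s(n)$ down to level $n$ composed with $f_{s(n)}$ and a bonding map of $T(X)$, and the identification with $f_n\circ q_{f(n),m}$ is then obtained by \emph{composing homotopies}, invoking directly the hypothesis that $(f_n,f)$ is a morphism in pro-$HTop$ --- not by forming a union. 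Restructure your argument so that Lemma~\ref{lem:unionhomotopica} is applied only to the nearest-point and retraction comparisons, and the morphism property of $(f_n,f)$ enters as a homotopy in the final step.
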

\begin{proof}
We consider a representative $(f_n',f')$ of $E(T(f_n))$, where $(f_n',f')$ is induced by an approximative map $F$ induced by $(f_n,f)$. Given $n\in \mathbb{N}$, we need to verify that for every $l\geq f'(n),f(n)$ the following diagram commutes up to homotopy.
\[
  \begin{tikzcd}[row sep=large,column sep=huge]
  \mathcal{U}_{4\delta_{f(n)}}(A_{f(n)}) \arrow[dr,"f_n"']  & \mathcal{U}_{4\delta_{l}}(A_{l})  \arrow[l,"q_{f(n),l}"'] \arrow{r}{q_{f'(n),l}}& \mathcal{U}_{4\delta_{f'(n)}}(A_{f'(n)})\arrow[dl,"f'_n"] \\
  &\mathcal{U}_{4\epsilon_n(B_n)} & 
    \end{tikzcd} 
\] 

Without loss of generality we can assume that $f'(n)>f(n)$. Then, we need to check the commutativity up to homotopy of the following diagram, where $h=(f_{s(n)}\circ p_{f(s(n))})^*$.
\[
  \begin{tikzcd}[row sep=large,column sep=huge]
\mathcal{U}_{4\delta_{f'(n)}}(A_{f'(n)}) \arrow{d}{q} \arrow{r}{i} & \mathcal{U}_{\gamma_n}(X)\arrow{r}{h} & \mathcal{U}_{2\epsilon_n}(Y)\arrow{r}{r_n} & \mathcal{U}_{4\epsilon_n} (B_n) \\
\mathcal{U}_{4\delta_{f(n)}}(B_{f(n)}) \arrow{urrr}{f_n} & & & 
    \end{tikzcd} 
\] 
By construction, $h(C)=\bigcup_{c\in C} f_{s(n)}(p_{f(s(n))}(c))$ for every $C\in \mathcal{U}_{4\delta_{f'(n)}}(A_{f'(n)})$. We have $f'(n)\geq f(s(n))$ and $s(n)\geq n$. Then, it is easy to show that $q_{f'(n),f(s(n))}$ is homotopic to $p^*_{f(s(n))}\circ i$ because $p_{f(s(n))}(i(c))\subseteq q_{f'(n),f(s(n))}(c)$ for every $c\in C$. In addition, $r_n$ restricted to the image of $h\circ i$ is homotopic to $q_{s(n),n}$ because $r_n(a)\subseteq q_{s(n),n}(a)$. Since $(f_n,f)$ is a morphism in pro-$HTop$, it follows the commutativity up to homotopy of the diagram.
\end{proof}


\begin{thm}\label{thm:ShapeDescriptionNew} Let $(X,d)$ and $(Y,l)$ be compact metric spaces. The set of shape morphisms between $X$ and $Y$ is in bijective correspondence with the set of morphisms in pro-$HTop$ between $T(X)$ and $T(Y)$.
\end{thm}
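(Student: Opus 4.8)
The plan is to establish the bijection by exhibiting mutually inverse maps between the two sets of morphisms. By Theorem \ref{thm:descripcionShapeMoronSanjurjo} together with Theorem \ref{thm:approximativeCategory}, the set of shape morphisms from $X$ to $Y$ is in bijective correspondence with the set of homotopy classes of approximative maps from $X$ to $Y$. Therefore it suffices to produce a bijection between homotopy classes of approximative maps and morphisms in pro-$HTop$ between $T(X)$ and $T(Y)$. The two candidate maps are already at hand: the assignment $[\overline{f}]\mapsto T([\overline{f}])$ of Proposition \ref{prop:approxFASmap}, and the assignment $(f_n,f)\mapsto E(f_n,f)$ of Proposition \ref{prop:FAStoApprox}.

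First I would record that both constructions are well-defined on the relevant equivalence classes: Proposition \ref{prop:approxFASmap} shows that $T([\overline{f}])$ does not depend on the chosen representative $\overline{f}$ of the homotopy class, and Proposition \ref{prop:FAStoApprox} shows that $E(f_n,f)$ depends only on the equivalence class of $(f_n,f)$ in pro-$HTop$. Thus we genuinely have two well-defined set maps
\[
T:\{\text{homotopy classes of approximative maps } X\to Y\}\longrightarrow \mathrm{pro\text{-}}HTop(T(X),T(Y)),
\]
\[
E:\mathrm{pro\text{-}}HTop(T(X),T(Y))\longrightarrow \{\text{homotopy classes of approximative maps } X\to Y\}.
\]

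Next I would invoke Lemma \ref{lem:ETidentidad} and Lemma \ref{lem:TEidentidad} directly. Lemma \ref{lem:ETidentidad} states exactly that $E(T([\overline{f}]))=[\overline{f}]$ for every homotopy class of approximative maps, which says $E\circ T=\mathrm{id}$. Lemma \ref{lem:TEidentidad} states that $T(E(f_n,f))=(f_n,f)$ for every morphism in pro-$HTop$, which says $T\circ E=\mathrm{id}$. Together these two identities show that $T$ and $E$ are mutually inverse bijections, so the set of homotopy classes of approximative maps and the set of pro-$HTop$ morphisms between $T(X)$ and $T(Y)$ are in bijective correspondence. Composing with the bijection from Theorem \ref{thm:descripcionShapeMoronSanjurjo} yields the stated correspondence between shape morphisms and pro-$HTop$ morphisms, completing the argument.

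Since all of the substantive work has been front-loaded into Propositions \ref{prop:approxFASmap} and \ref{prop:FAStoApprox} and Lemmas \ref{lem:ETidentidad} and \ref{lem:TEidentidad}, I do not expect any genuine obstacle at this final stage; the proof is essentially a bookkeeping assembly of the established pieces. The one point that warrants a sentence of care is that the correspondence should be seen as independent of the particular finite approximations $T(X)$ and $T(Y)$ chosen, which is guaranteed because any two finite approximations of the same space are isomorphic in pro-$HTop$ (as recalled in Section \ref{sec:descripcion} and Remark \ref{rem:mainConstruction}); composing the bijection with such an isomorphism on either side leaves it a bijection. If anything is delicate, it is only verifying that $T$ and $E$ are set maps on the correct equivalence classes rather than on representatives, but this is precisely what the cited propositions already guarantee.
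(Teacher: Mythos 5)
Your proposal is correct and follows exactly the paper's own argument: the paper's proof likewise identifies shape morphisms with homotopy classes of approximative maps via Theorem \ref{thm:descripcionShapeMoronSanjurjo}, and then deduces the bijection immediately from Propositions \ref{prop:approxFASmap} and \ref{prop:FAStoApprox} together with Lemmas \ref{lem:ETidentidad} and \ref{lem:TEidentidad}. Your version simply spells out the bookkeeping (well-definedness on equivalence classes and independence of the chosen finite approximations) that the paper leaves implicit.
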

\begin{proof}
We consider the constructions made in Proposition \ref{prop:approxFASmap} and Proposition \ref{prop:FAStoApprox}. Thus, the result is an immediate consequence of Lemma \ref{lem:ETidentidad} and Lemma \ref{lem:TEidentidad}.
\end{proof}

The task is now to prove that $T$ is indeed a functor. 
\begin{lem} Let $[\overline{f}]:X\rightarrow Y$ and $[\overline{g}]:Y\rightarrow Z$ be two approximative maps. Then $T([\overline{g}]\circ [\overline{f}])=T([\overline{g}])\circ T([\overline{f}])$. 
\end{lem}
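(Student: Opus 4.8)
The plan is to show that the functor $T$ respects composition by unwinding both sides through the bijection established in Theorem \ref{thm:ShapeDescriptionNew}. Since $E$ and $T$ are mutually inverse on morphisms (Lemma \ref{lem:ETidentidad} and Lemma \ref{lem:TEidentidad}), it suffices to verify the identity after applying $E$, i.e.\ to prove $E(T([\overline{g}])\circ T([\overline{f}])) = [\overline{g}]\circ[\overline{f}]$, and then invoke that $E$ is a bijection. This reduces a statement about morphisms in pro-$HTop$ to a statement about homotopy classes of approximative maps, where the composition law has an explicit description (the reindexing by the sequences $\{\epsilon_n\}$, $\{\nu_n\}$ and indices $k_1<k_2<\cdots$ recalled just before Theorem \ref{thm:approximativeCategory}).

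First I would fix representatives: let $\overline{f}=\{f_k:X\to 2^Y\}$ and $\overline{g}=\{g_k:Y\to 2^Z\}$, and write out $T([\overline{f}])=(f_n,f)$ and $T([\overline{g}])=(g_n,g)$ using the formulas from the proof of Proposition \ref{prop:approxFASmap}, namely $f_n=r_n\circ f_{s(n)}^*\circ i$ (with the reindexing function $f(n)$ coming from the extension Theorem \ref{thm:extensionHomotopiaHyperspaces}) and similarly for $g_n$. The composite in pro-$HTop$, $T([\overline{g}])\circ T([\overline{f}])$, is then the morphism $(g_n\circ f_{g(n)},\, f\circ g)$ after the standard reindexing for composition of pro-morphisms. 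Applying $E$ produces the approximative map whose $k$-th term is $g_k\circ f_{g(k)}\circ p_{f(g(k))}$, up to homotopy in shrinking neighborhoods. On the other side, $[\overline{g}]\circ[\overline{f}]$ is represented by $\{g_n\circ f_{k_n}:X\to 2^Z\}$ where the $k_n$ are chosen so that $f_{k}$ is homotopic to $f_{k_n}$ in $U_{\nu_n}(Y)$ for all $k\ge k_n$.

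The heart of the argument is to compare these two approximative maps neighborhood by neighborhood. Given an open neighborhood $U$ of $Z$ in $2^Z$, I would pick $U_{4\epsilon_n}(Z)\subseteq U$ via Proposition \ref{prop:baseEntornosAbiertos}, and then choose the indices large enough that both the composition-reindexing for $[\overline{g}]\circ[\overline{f}]$ and the reindexing built into $T$ and $E$ stabilize. The key technical point is that the maps $p_{f(g(k))}$ appearing through $T$ and $E$ are, on the relevant hyperspace neighborhoods, homotopic to the canonical embeddings used in the intrinsic composition; this is exactly the type of union-homotopy argument packaged in Lemma \ref{lem:unionhomotopica}, combined with the fact (used repeatedly in Lemma \ref{lem:ETidentidad}) that $r_n$ is homotopic to the inclusion $i$ on sets of small diameter. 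Once both composites are rewritten as $\{g_{s(n)}\circ f_{\text{(large index)}}\}$ up to homotopy inside $U_{4\epsilon_n}(Z)$, they agree by the independence of the intrinsic composition from representatives.

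The main obstacle I anticipate is bookkeeping the three independent reindexings that collide here: the index shifts $s(\cdot)$ and $f(\cdot),g(\cdot)$ built into $T$ via the extension theorem, the composition reindexing $k_n$ from the definition of $[\overline{g}]\circ[\overline{f}]$, and the further reindexing forced when composing two pro-$HTop$ morphisms. I would handle this by passing to a common cofinal subsequence of indices—large enough for every homotopy to hold simultaneously in the target neighborhood—so that all the reindexing functions become comparable; the compatibility conditions on $\{\epsilon_n\}$ and $\{\nu_n\}$ in the definition of composition guarantee such a cofinal choice exists. After that alignment, the verification that the two diagrams commute up to homotopy in $U_{4\epsilon_n}(Z)$ is routine and follows the same estimates on diameters (triangle inequalities controlling $l(x,y)$) already used in Proposition \ref{prop:approxFASmap} and Lemma \ref{lem:TEidentidad}.
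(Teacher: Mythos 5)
Your reduction is legitimate and leads to a correct proof, but it takes a genuinely different route from the paper. The paper verifies functoriality directly on the pro-$HTop$ side: it writes down the three induced morphisms $(r_n\circ f^*_{s(n)}\circ i,f)$, $(r_n\circ g^*_{h(n)}\circ i,g)$ and $(r_n\circ h^*_{l(n)}\circ i,h)$ with $h_n=g^*_n\circ f^*_{k_n}$, and checks that the composite of the first two is equivalent to the third by a diagram chase whose only nontrivial step is showing $g^*_{h(n)}\circ r_{h(n)}$ homotopic to $g^*_{h(n)}$ via the diameter bound $\text{diam}(C\cup r_{h(n)}(C))<4\epsilon_{h(n)}$ and Lemma \ref{lem:unionhomotopica}. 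You instead invoke that $E$ and $T$ are mutually inverse (Lemmas \ref{lem:ETidentidad} and \ref{lem:TEidentidad}, both proved before this point, so the appeal is not circular) and transfer the whole verification to the approximative-map side, in effect proving that $E$ respects composition. What this buys you is a cleaner logical skeleton that reuses the established bijection; what it costs is that the actual comparison of $\{g_k\circ f_{g(k)}\circ p_{f(g(k))}\}$ with $\{g_n\circ f_{k_n}\}$ still requires exactly the same two technical inputs the paper uses --- the union-homotopy Lemma \ref{lem:unionhomotopica} and the fact that $r_n$ is homotopic to the inclusion on small-diameter sets --- plus the independence of the intrinsic composition from representatives, so no work is actually saved, only relocated. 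Your explicit plan to pass to a common cofinal subsequence to reconcile the reindexings $s(\cdot)$, $f(\cdot)$, $g(\cdot)$ and $k_n$ is the right way to handle the bookkeeping and mirrors what the paper does implicitly when it sets $m=\max\{l(n),f(g(n))\}$.
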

\begin{proof}
We consider $\{f_n:X\rightarrow 2^Y\}_{n\in \mathbb{N}}\in [\overline{f}]$, $\{g_n:X\rightarrow 2^Y\}_{n\in \mathbb{N}}\in [\overline{g}]$ and $\{g_n\circ f_{k_n}:X\rightarrow 2^Z\}_{n\in \mathbb{N}}\in [\overline{g}]\circ [\overline{f}]$. From the proof of Theorem \ref{prop:approxFASmap} we get that $(r_n\circ f^*_{s(n)}\circ i,f)$ is a morphism in pro-$HTop$ induced by $\{f_n:X\rightarrow 2^Y\}_{n\in \mathbb{N}}$, $(r_n\circ g^*_{h(n)}\circ i,g)$ is a morphism in pro-$HTop$ induced by $\{g_n:X\rightarrow 2^Y\}_{n\in \mathbb{N}}$ and $(r_n\circ h^*_{l(n)}\circ i,h)$ is a morphism in pro-$HTop$ induced by $\{g_n\circ f_{k_n}:X\rightarrow 2^Z\}$, where $h_n=g^*_n\circ f^*_{k_n}$, $T(X)=(\mathcal{U}_{4\delta_n}(A_n),q_{n,n+1})$, $T(Y)=(\mathcal{U}_{4\epsilon_n}(B_n),q_{n,n+1})$ and $T(Z)=(\mathcal{U}_{4\psi_n}(C_n),q_{n,n+1})$. It suffices to show that $(r_n\circ g^*_{h(n)}\circ i,g)\circ (r_n\circ f^*_{s(n)}\circ i,f)$ is homotopic to $(r_n\circ h^*_{l(n)}\circ i,h)$, that is, the following diagram commutes up to homotopy for some $m\geq h(n),g(f(n))$.

\[
  \begin{tikzcd}[row sep=normal,column sep=normal]
\mathcal{U}_{4\delta_{f(h(n))}}(A_{f(h(n))}) \arrow{r}{i} & \mathcal{U}_{\gamma_n}(X)\arrow{r}{f^*_{s(h(n))}} & \mathcal{U}_{2\epsilon_{h(n)}}(Y)\arrow{r}{r_{h(n)}} & \mathcal{U}_{4\epsilon_{h(n)}} (B_{h(n)}) \arrow{r}{i} & \mathcal{U}_{\tau}(Y) \arrow{r}{g^*_{h(n)}} & \mathcal{U}_{2\psi_n}(Z) \arrow{d}{r_n} &  \\
\mathcal{U}_{4\delta_m}(A_m) \arrow{u}{q} \arrow{d}{q} & & & & &  \mathcal{U}_{4\psi_n}(C_n) & \\
\mathcal{U}_{4\delta_{l(n)}}(A_{l(n)}) \arrow{r}{i} &  \mathcal{U}_{\rho_n}(X) \arrow{rrrr}{h_{l(n)}^*} & &  & & \mathcal{U}_{2\psi_n}(Z) \arrow{u}{r_n}  &
    \end{tikzcd} 
\] 
Without loss of generality we can assume that $m=max\{l(n),f(g(n)) \}=l(n)$ and $\gamma_n\geq \rho_n$. Hence, the commutativity up to homotopy of the following diagram follows trivially.
\[
  \begin{tikzcd}[row sep=normal,column sep=huge]
\mathcal{U}_{4\delta_{f(h(n))}}(A_{f(h(n))}) \arrow{r}{i} & \mathcal{U}_{\gamma_n}(X) \\
\mathcal{U}_{4\delta_{l(n)}}(A_{l(n)}) \arrow{u}{q}  \arrow{r}{i} &  \mathcal{U}_{\rho_n}(X) \arrow{u}{i}
    \end{tikzcd} 
\] 
We verify the commutative up to homotopy of the following diagram.

\[
  \begin{tikzcd}[row sep=normal,column sep=huge]
  \mathcal{U}_{2\epsilon_{h(n)}(Y)}\arrow{r}{r_{h(n)}} \arrow{dr}{g^*_{h(n)}} & \mathcal{U}_{4\epsilon_{h(n)}}(B_{h(n)})\arrow{d}{g^*_{h(n)}} \\
   & \mathcal{U}_{2\psi_n}(Z) 
    \end{tikzcd} 
\] 
For every $C\in \mathcal{U}_{2\epsilon_{h(n)}(Y)}$ we get that $\text{diam}(C\cup r_{h(n)}(C))<4\epsilon_{h(n)}$. Hence, $g^*_{h(n)}\cup g^*_{h(n)} \circ r_{h(n)}:\mathcal{U}_{2\epsilon_{h(n)}(Y)}\rightarrow \mathcal{U}_{2\psi_n}(Z) $ given by $g^*_{h(n)}\cup g^*_{h(n)} \circ r_{h(n)}(C)=g^*_{h(n)}(C)\cup g^*_{h(n)}(r_{h(n)}(C))$ is well-defined. By Lemma \ref{lem:unionhomotopica} we get that the above diagram is commutative up to homotopy.

From the commutativity of the previous diagrams and the properties of approximative maps, it follows that the first diagram commutes up to homotopy.
\end{proof}

\begin{lem} Let $[ \textnormal{id} ]$ be the class of the identity morphism $\{ \textnormal{id}_n:X\rightarrow 2^X\}$. Then $T([\textnormal{id}]):T(X)\rightarrow T(X)$ is homotopic to the identity morphism in pro-$HTop$. 
\end{lem}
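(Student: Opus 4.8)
The plan is to reduce the statement to the already-established fact that $E$ is a right inverse of $T$ on morphism sets (Lemma \ref{lem:TEidentidad}), rather than to compute $T([\mathrm{id}])$ head-on. Write $T(X)=(\mathcal{U}_{4\delta_n}(A_n),q_{n,n+1})$ and let $\mathbf{1}=(\mathrm{id}_n,\mathrm{id})$ be the identity morphism of $T(X)$ in pro-$HTop$, where $\mathrm{id}_n$ is the identity of $\mathcal{U}_{4\delta_n}(A_n)$ and the index map is the identity of $\mathbb{N}$. By Lemma \ref{lem:TEidentidad} we have $T(E(\mathbf{1}))=\mathbf{1}$, so it suffices to prove $E(\mathbf{1})=[\mathrm{id}]$, i.e.\ that the approximative map induced by the identity pro-morphism is homotopic to the canonical identity $\{i_n\}$ with $i_n(x)=\{x\}$; applying $T$ then yields $T([\mathrm{id}])=\mathbf{1}$.

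Next I would compute $E(\mathbf{1})$ explicitly using the recipe of Proposition \ref{prop:FAStoApprox}. There $E(f_n,f)$ is the class of $\{F_k=f_k\circ p_{f(k)}\}$, with $p_n:X\to\mathcal{U}_{4\delta_n}(A_n)$ the nearest-point map $p_n(x)=\{a\in A_n\mid d(x,a)=d(x,A_n)\}$ of Lemma \ref{lem:definiciondep:x}. For $\mathbf{1}$ we have $f_k=\mathrm{id}_k$ and $f=\mathrm{id}$, so $F_k=p_k$; thus $E(\mathbf{1})$ is the class of the approximative map $\{p_k:X\to 2^X\}$.

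The core of the argument is therefore to show $\{p_k\}\simeq\{i_k\}$ as approximative maps, for which I would use the union trick of Lemma \ref{lem:unionhomotopica}. Fix an open neighborhood $U$ of the canonical copy of $X$ in $2^X$; by Proposition \ref{prop:baseEntornosAbiertos} there is $\epsilon>0$ with $U_\epsilon(X)\subseteq U$, and since $\delta_{n+1}<\delta_n/2$ we may pick $n_0$ with $2\delta_{n_0}<\epsilon$. For $n\ge n_0$ every point of $p_n(x)$ lies within $d(x,A_n)<\delta_n$ of $x$, so $\mathrm{diam}(p_n(x))<2\delta_n<\epsilon$ and likewise $\mathrm{diam}(p_n(x)\cup\{x\})<2\delta_n<\epsilon$; hence $p_n$, $i_n$ and the union $p_n\cup i_n$ all take values in $U_\epsilon(X)$, and $p_n\cup i_n$ is well defined. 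By Lemma \ref{lem:unionhomotopica} the map $p_n\cup i_n$ is homotopic in $U_\epsilon(X)\subseteq U$ to both $p_n$ and $i_n$, so $p_n\simeq i_n$ in $U$ for all $n\ge n_0$. This is exactly the condition of Definition \ref{def:approximativeMapHomotopia}, giving $E(\mathbf{1})=[\{p_k\}]=[\{i_k\}]=[\mathrm{id}]$, and the conclusion follows.

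I expect no serious obstacle along this route; the only points demanding care are the bookkeeping of constants in the diameter estimate and the verification that $p_n\cup i_n$ stays inside $U_\epsilon(X)$ uniformly for $n\ge n_0$. A direct alternative would avoid $E$ entirely: since the hyperspace lift of the canonical embedding $x\mapsto\{x\}$ is the identity of $2^X$ (Lemma \ref{lem:elevacionHiperespacio}), the recipe of Proposition \ref{prop:approxFASmap} gives $T([\mathrm{id}])=(r_n\circ i,f)$ with $r_n$ the nearest-point projection, after which one would check the pro-$HTop$ equivalence $r_n\circ i\circ q_{f(n),m}\simeq q_{n,m}$ for a suitable $m$. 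I would avoid this, since comparing the nearest-point maps $r_n$ with the ball-intersection bonding maps $q_{n,m}$ requires extra metric estimates, whereas the route through $E$ localizes all of the analysis into the single clean homotopy $\{p_k\}\simeq\{i_k\}$.
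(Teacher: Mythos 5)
Your proof is correct, but it takes a genuinely different route from the paper. The paper's own argument is a two-line direct computation: it takes the representative $(I_n,I)$ of $T([\textnormal{id}])$ produced by Proposition \ref{prop:approxFASmap}, notes that the index map is the identity and that $I_n=r_n\circ\textnormal{id}^*\circ i$ restricted to $\mathcal{U}_{4\delta_n}(A_n)$ is literally the identity --- because the nearest-point map $r_n$ fixes every subset of $A_n$ pointwise --- and concludes that $T([\textnormal{id}])$ is represented by the identity morphism on the nose. (So the ``extra metric estimates'' you feared in the direct alternative do not arise: no comparison of $r_n$ with the bonding maps $q$ is needed once one sees $r_n(C)=C$ for $C\subseteq A_n$.) You instead invoke Lemma \ref{lem:TEidentidad} to reduce the claim to $E(\mathbf{1})=[\textnormal{id}]$, compute $E(\mathbf{1})=[\{p_k\}]$ from Proposition \ref{prop:FAStoApprox}, and then run the union trick of Lemma \ref{lem:unionhomotopica} on $p_n\cup i_n$ with the diameter bound $2\delta_n<\epsilon$ to get $p_n\simeq i_n$ in $U_\epsilon(X)$; this is sound, there is no circularity since Lemma \ref{lem:TEidentidad} does not depend on the present lemma, and your bookkeeping (each point of $p_n(x)$ lies within $\delta_n$ of $x$, so $\mathrm{diam}(p_n(x)\cup\{x\})<2\delta_n$) is right. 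What your detour buys is the additional and independently useful fact that $E$ sends the identity pro-morphism to the class of the canonical identity approximative map, i.e.\ that $E$ preserves identities as well; what it costs is length and a dependence on two substantial propositions where the paper needs only the definition of $r_n$.
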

\begin{proof}
We have $T(X)=(\mathcal{U}_{4\delta_n}(A_n),q_{n,n+1})$. Consider $(I_n,I)\in T([\text{id}])$ given by Proposition \ref{prop:approxFASmap}, that is, $I_n=r_n$ and $I:\mathbb{N}\rightarrow\mathbb{N}$ is the identity map. By the definition of $r_n $ it is easily seen that $I_n$ is the identity map. Therefore, it can be deduced that $T([\text{id}])$ is the identity morphism in pro-$HTop$.
\end{proof}

Combining previous results the proof of Theorem \ref{thm:shapeTheorem} is straightforward.

\bibliography{bibliografia}
\bibliographystyle{plain}

\newcommand{\Addresses}{{
  \bigskip
  \footnotesize

  \textsc{ P.J. Chocano, Departamento de Matemática Aplicada,
Ciencia e Ingeniería de los Materiales y
Tecnología Electrónica, ESCET
Universidad Rey Juan Carlos, 28933
Móstoles (Madrid), Spain}\par\nopagebreak
  \textit{E-mail address}:\texttt{pedro.chocano@urjc.es}
  
  \medskip

\textsc{ M.A. Mor\'on,  Departamento de \'Algebra, Geometr\'ia y Topolog\'ia, Universidad Complutense de Madrid and Instituto de
Mat\'ematica Interdisciplinar, Plaza de Ciencias 3, 28040 Madrid, Spain}\par\nopagebreak
  \textit{E-mail address}: \texttt{ma\_moron@mat.ucm.es}

  \medskip

\textsc{ F. R. Ruiz del Portal,  Departamento de \'Algebra, Geometr\'ia y Topolog\'ia, Universidad Complutense de Madrid and Instituto de
Matem\'atica Interdisciplinar
, Plaza de Ciencias 3, 28040 Madrid, Spain}\par\nopagebreak
  \textit{E-mail address}: \texttt{R\_Portal@mat.ucm.es}

}}

\Addresses

\end{document}